\theoremstyle{plain}
\newtheorem{theorem}{Theorem}
\newtheorem{lemma}[theorem]{Lemma}
\newtheorem{proposition}[theorem]{Proposition}
\tikzset{curve/.style={settings={#1},to path={(\tikztostart)
    .. controls ($(\tikztostart)!\pv{pos}!(\tikztotarget)!\pv{height}!270:(\tikztotarget)$)
    and ($(\tikztostart)!1-\pv{pos}!(\tikztotarget)!\pv{height}!270:(\tikztotarget)$)
    .. (\tikztotarget)\tikztonodes}},
    settings/.code={\tikzset{quiver/.cd,#1}
        \def\pv##1{\pgfkeysvalueof{/tikz/quiver/##1}}},
    quiver/.cd,pos/.initial=0.35,height/.initial=0}
\tikzset{halo/.style={
         preaction={draw,white,line width=4pt,-},
         preaction={draw,white,ultra thick, shorten >=-2.5\pgflinewidth}}}
\newcommand{\dotminus}{\mathbin{\text{\@dotminus}}}
\newcommand{\@dotminus}{\ooalign{\hidewidth\raise1ex\hbox{.}\hidewidth\cr$\m@th-$\cr}}
\newcommand{\cat}{\mathbf}
\newcommand{\subsetof}{\subseteq}
\newcommand{\id}{\mathrm{id}}
\newcommand{\inprod}[2]{\langle #1 \mid #2 \rangle}
\newcommand{\inl}{\mathrm{inl}}
\newcommand{\inr}{\mathrm{inr}}
\newcommand{\CC}{\mathbb{C}}
\newcommand{\RR}{\mathbb{R}}
\renewcommand{\H}{\mathcal H}
\newcommand{\K}{\mathcal K}
\newcommand{\NN}{\mathbb N}
\DeclareMathOperator{\coker}{coker}
\newcommand{\smult}{\bullet}
\newcommand{\comp}{\circ}
\theoremstyle{definition}
\newtheorem{example}[theorem]{Example}
\newtheorem{definition}[theorem]{Definition}
\theoremstyle{remark}
\begin{document}

\title[Axioms for the category of Hilbert spaces and linear contractions]{Axioms for the category of Hilbert spaces\\ and linear contractions}

\author{Chris Heunen}
\address{University of Edinburgh}
\email{chris.heunen@ed.ac.uk}

\author{Andre Kornell}
\address{Dalhousie University}
\email{akornell@dal.ca}

\author{Nesta van der Schaaf}
\address{University of Edinburgh}
\email{n.schaaf@ed.ac.uk}

\thanks{Andre Kornell was supported by the Air Force Office of Scientific Research under Awards No.\ FA9550-16-1-0082 and FA9550-21-1-0041.}

\begin{abstract}
  The category of Hilbert spaces and linear contractions is characterised by elementary categorical properties that do not refer to probabilities, complex numbers, norm, continuity, convexity, or dimension.  
\end{abstract}

\maketitle

\section{Introduction}

Quantum mechanics, in its traditional formulation, is based on Hilbert space~\cite{vonneumann:foundations}. More precisely, it is based on mappings of Hilbert spaces. In the simplest telling, states are unit vectors, evolving along certain linear maps between Hilbert spaces. Taking into account probabilities, states become operators, and dynamics certain linear maps between the resulting algebras. Either way, it is fair to say that the category of Hilbert spaces and linear maps is crucial to quantum theory. 

This model gives accurate results, but the framework of Hilbert spaces is hard to interpret physically from first principles~\cite{redei:vonneumannhilbert}.
Many reconstruction programmes try to reformulate the framework mathematically, so that the primitive assumptions rely on fewer physically unjustified details~\cite{grinbaum:reconstruction}. For example, instead of starting with Hilbert space, they start from operator algebras~\cite{murrayneumann:ringsofoperators}, orthomodular lattices~\cite{piron:foundations}, generalised probabilistic theories~\cite{segal:postulates}, or from categories~\cite{heunenvicary}.

In this final case, the situation comes down to the following. Somebody hands you a category. How do you know it is (equivalent to) that of Hilbert spaces? The answer depends on which morphisms between Hilbert spaces you choose exactly. Previous work~\cite{heunenkornell:hilbert} settles the case of bounded linear maps, by giving axioms characterising that category precisely. Importantly, those axioms are purely categorical, and do not mention probabilities, complex numbers, or other analytical details that you may think are fundamental to Hilbert space.

Nevertheless, the story does not end with that answer, because not all bounded linear maps are physical. Quantum systems may only evolve along unitary linear maps. Taking quantum measurement into account allows the larger class of linear contractions~\cite{selinger:flowcharts,andresmartinezheunenkaarsgaard:cptn}. 
But no quantum-theoretical process can be described by a bounded linear map that is not a contraction.
This article settles the case for the physical choice of morphisms being linear contractions. 

The main idea is the following: The subcategory of Hilbert spaces and linear contractions generates the category of Hilbert spaces and all bounded linear maps as a monoidal category with invertible nonzero scalars (where scalars in a monoidal category are endomorphism on the tensor unit). Specifically, if you formally adjoin inverses for all nonzero scalars to the former category, you get the latter. We find properties of the former category that guarantee that its completion satisfies the axioms of~\cite{heunenkornell:hilbert} and hence is the category of Hilbert spaces and bounded linear maps. Finally, using the nature of the completion, we show that the former category must consist of exactly the linear contractions. 

The presented axioms are natural from a categorical point of view. They progress in a modular way, so that fragments are compatible with other research programmes. For example, biproducts and equalisers are used as in abelian categories and regular categories~\cite{borceux:2}, the tensor product is used as in categorical quantum mechanics~\cite{heunenvicary} and the string diagrams of ZX-calculus~\cite{coeckeduncan:zx}, and the combination of daggers and kernels is used as in quantum logic~\cite{heunenjacobs:kernels}. Overall, our axioms begin with the structure of dagger rig categories, which form the starting point for a complete graphical model of mixed state quantum theory~\cite{carettejeandelperdrixvilmart:completeness} as well as for both classical and quantum reversible programming~\cite{heunenkaarsgaard:quantumeffects,caretteheunenkaarsgaardsabry:roots}. Moreover, these axioms enable categorical methods such as the use of universal properties in quantum information theory~\cite{huotstaton:channels}.

In addition to being an important characterisation in its own right, this theorem is also a stepping stone towards trying to characterise related categories:
Hilbert spaces and completely positive morphisms, going towards foundations of quantum information theory;
Hilbert spaces and unitaries, going towards foundations of quantum computing;
and Hilbert modules and adjointable morphisms, going towards unitary representations and foundations of quantum field theory~\cite{barbosaheunen:sheaves}.

The rest of this article is structured as follows. We start by stating the axioms in Section~\ref{sec:axioms}. Their meaning is discussed in Section~\ref{sec:category}. In particular, the category of Hilbert spaces and linear contractions is defined there, and it is shown how it satisfies the axioms. Next, Section~\ref{sec:basics} derives basic properties from the axioms. The real work begins in Section~\ref{sec:completion}, which defines a completion by which we can abstractly recognise linear contractions among all bounded linear maps. Section~\ref{sec:theorem} puts everything together to prove the main theorem. 

\section{The axioms}\label{sec:axioms}

We will consider the following properties of a locally small category that is equipped with a contravariant endofunctor $\dagger$ and two symmetric monoidal structures $(\otimes,I)$ and $(\oplus,0)$:

\begin{enumerate}[label=(\arabic*),ref=\arabic*]
  \item\label{axiom:dagger}
  The contravariant endofunctor $\dagger$ satisfies $\id_H^\dagger = \id_H$ for all objects $H$ and $t^{\dagger \dagger} = t$ for all morphisms $t$.
  A category equipped with such a dagger is also called a \emph{dagger category}.
  If $t^\dag \comp t = \id$, we call $t$ a \emph{dagger monomorphism}, and if additionally $t \comp t^\dag = \id$, we call it a \emph{dagger isomorphism}.

  \item\label{axiom:rig}
  The category is a \emph{dagger rig} category: the unitors, braidings, and associators of the monoidal structures $(\otimes,I)$ and $(\oplus,0)$ are dagger isomorphisms, and they satisfy $(s \otimes t)^\dag=s^\dag \otimes t^\dag$ and $(s \oplus t)^\dag=s^\dag \oplus t^\dag$ for all morphisms $s$ and $t$, and there are natural dagger isomorphisms
  \begin{align*}
    H \otimes (K \oplus L) & \to (H \otimes K) \oplus (H \otimes L) \\
    (H \oplus K) \otimes L & \to (H \otimes L) \oplus (K \otimes L) 
  \end{align*}
  that satisfy the coherence conditions (I)--(XXIV) of~\cite{laplaza:rig}.

  \item\label{axiom:affine}
  The unit $0$ is initial, and thus a zero object. We write $0_{H,K}$ for the unique morphism $H \to K$ that factors through the object $0$. It follows that there are natural morphisms:
  \begin{align*}
    \inl_{H,K} & = \Big(\smash{\begin{aligned}\begin{tikzpicture}[xscale=5]
      \node (h0) at (1,0) {$H \simeq H \oplus 0$};
      \node (hk) at (2,0) {$H \oplus K$};
      \draw[->] (h0) to node[above]{$\id \oplus 0$} (hk);
    \end{tikzpicture}\end{aligned}}\Big)
    \\
    \inr_{H,K} & = \Big(\smash{\begin{aligned}\begin{tikzpicture}[xscale=5]
      \node (h0) at (1,0) {$K \simeq 0 \oplus K$};
      \node (hk) at (2,0) {$H \oplus K$};
      \draw[->] (h0) to node[above]{$0 \oplus \id$} (hk);
    \end{tikzpicture}\end{aligned}}\Big)
  \end{align*}
  This property of $\oplus$ makes the category so-called \emph{affine} or \emph{semicartesian}.

  \item\label{axiom:jointlyepic}
  The injections $\inl_{H,K}$ and $\inr_{H,K}$ are \emph{jointly epic}:
  $s=t \colon H \oplus K \to L$ as soon as $s \comp \inl = t \comp \inl$ and $s \comp \inr = t \comp \inr$.

  \item\label{axiom:nondegenerate}
  \emph{Mixture} occurs: there exists a morphism $s \colon I \to I \oplus I$ with $\inl^\dag \comp s \neq 0 \neq \inr^\dag \comp s$.

  \item\label{axiom:simple}
  The unit $I$ is \emph{dagger simple}: any dagger monomorphism $S \to I$ is either invertible or zero but not both.

  \item\label{axiom:separator}
  The unit $I$ is a \emph{monoidal separator}: $s=t \colon H \otimes K \to L$ as soon as $s \comp (x \otimes y) = t \comp (x \otimes y)$ for all $x \colon I \to H$ and $y \colon I \to K$.

  \item\label{axiom:equalisers}
  Any two parallel morphisms have a \emph{dagger equaliser}, that is, an equaliser that is a dagger monomorphism.

  \item\label{axiom:kernels}
  Any dagger monomorphism $N \to H$ is a \emph{dagger kernel}, that is, a dagger equaliser of some morphism $H \to K$ and $0 \colon H \to K$. 

  \item\label{axiom:positive}
  Subobjects are determined by \emph{positive} maps: monomorphisms $r \colon H \rightarrowtail L$ and $s \colon K \rightarrowtail L$ satisfy $r=s \comp t$ for some isomorphism $t$ if and only if $r \comp r^\dag = s \comp s^\dag$.

  \item\label{axiom:colimits}
  Any directed diagram has a colimit.
\end{enumerate}
\setcounter{equation}{\theenumi}

Observe that these axioms are all elementary properties of categories: none of them refer to notions such as complex amplitudes, probabilities, norm, convexity, continuity, or metric completeness. 
These are natural axioms from a categorical point of view, and thus we hope that they can be interpreted as natural assumptions about quantum information processes. 
For example, one might interpret axiom~\eqref{axiom:dagger} in terms of time reversal symmetry and axioms~\eqref{axiom:rig}-\eqref{axiom:nondegenerate} in terms of superposition.
However, this task is beyond the scope of this work, and we leave it to future research. Furthermore, we conjecture that these axioms are all necessary for our result and independent. For example, the category of sets and partial injections satisfies all the axioms except~\eqref{axiom:nondegenerate}.

We end this section with a brief summary of the main ideas of previous work~\cite{heunenkornell:hilbert} that we will rely on.
That work assumed similar axioms as here. Axioms~\eqref{axiom:dagger} and \eqref{axiom:separator}--\eqref{axiom:kernels} are literally the same. Axioms~\eqref{axiom:rig}--\eqref{axiom:nondegenerate} resemble the axiom there stipulating dagger biproducts. Axiom~\eqref{axiom:simple} here concerns dagger monomorphisms but there concerns all monomorphisms, and similarly, axiom~\eqref{axiom:colimits} here concerns colimits in the category itself but there concerns colimits in the wide subcategory of dagger monomorphisms. Axiom~\eqref{axiom:positive} here has no analogue there. A category $\cat{C}$ satisfying those axioms is there shown to be equivalent to the category of Hilbert spaces as follows.
Just as vectors of a Hilbert space~$\H$ are recovered as linear maps $\mathbb{C}\to \H$ in the complex case, the idea is that an object $H\in\cat{C}$ can be turned into a concrete Hilbert space by taking the set $\cat{C}(I,H)$, which intuitively consists of the vectors of $H$, and equipping it with the structure of a complete inner product space over the field $\cat{C}(I,I)$. The endomorphisms $I\to I$ always form a commutative monoid, but due to the axioms imposed on $\cat{C}$ it turns into an involutive field~\cite[Lemma~1]{heunenkornell:hilbert}. In fact, by Sol\`er's Theorem, this field must be either $\mathbb{R}$ or $\mathbb{C}$ \cite[Proposition~5]{heunenkornell:hilbert}. The main result then showed that there is a functor $\cat{C}(I,-)$ from $\cat{C}$ to the category of Hilbert spaces that defines an equivalence of dagger rig categories \cite[Theorem~8]{heunenkornell:hilbert}.

\section{The category}\label{sec:category}

We define our main model. Recall that a \emph{Hilbert space} is a vector space $\mathcal{H}$ over $\mathbb{R}$ or $\mathbb{C}$, together with an inner product $\inprod{-}{-}$ whose induced norm $\| x\|^2= \langle x\vert x\rangle$ makes $\mathcal{H}$ Cauchy-complete.

\begin{definition}
  A linear function $T \colon \H \to \K$ between Hilbert spaces is \emph{bounded} when there exists a constant $N \in [0,\infty)$ such that $\|Tx\| \leq N \cdot \|x\|$ for all $x \in \H$; write $\|T\|$ for the infimum of such $N$.
  The function $T$ is called a \emph{contraction} when $\|T\|\leq 1$, that is, when $\|Tx\| \leq \|x\|$ for all $x \in \H$. Such linear functions are also known as \emph{nonexpansive} or \emph{short} maps.
  We write $\cat{Hilb}_\RR$ and $\cat{Hilb}_\CC$ for the categories of Hilbert spaces and bounded linear maps, and $\cat{Con}_\RR$ and $\cat{Con}_\CC$ for the categories of Hilbert spaces and short linear maps, respectively over the real and complex numbers. When the distinction does not matter, we will simply write $\cat{Hilb}$ and $\cat{Con}$. 
\end{definition}

Let us discuss how $\cat{Con}$ satisfies the axioms.

\begin{enumerate}
  \item[\eqref{axiom:dagger}]    
  The dagger is provided by adjoints: for any bounded linear function $T \colon \H \to \K$ there is a unique linear map $T^\dag \colon \K \to \H$ satisfying ${\inprod{Tx}{y}_\K = \inprod{x}{T^\dag y}_\H}$ for all $x \in \H$ and $y \in \K$. This is a well-defined functor on $\cat{Con}$ because $\|T^\dag\| = \|T\|$. The dagger monomorphisms are the isometries, and the dagger isomorphisms are the unitaries. We shall use both terminologies, depending on whether we are working with an abstract dagger category or are in the concrete setting of Hilbert spaces.

  \item[\eqref{axiom:rig}]
  The monoidal structures $\otimes$ and $\oplus$ are produced by tensor products and direct sums of Hilbert spaces, respectively. This is well-defined in $\cat{Con}$ because $\|S \otimes T\| = \|S\| \cdot \|T\|$ and $\|S \oplus T\|=\max\{\|S\|,\|T\|\}$.
  The unit $I$ is the one-dimensional Hilbert space given by the base field, and the unit $0$ is the zero-dimensional Hilbert space $\{0\}$.

  \item[\eqref{axiom:affine}]
  Linear maps must preserve the 0 vector, so the object $0$ is indeed initial.

  \item[\eqref{axiom:jointlyepic}]
  The injections $\inl_{\H,\K}(x)=(x,0)$ and $\inr_{\H,\K}(y)=(0,y)$ have images that linearly span $\H \oplus \K$, so are clearly jointly epic.

  \item[\eqref{axiom:nondegenerate}]
  The vector $s=(\tfrac{1}{2},\tfrac{1}{2})$ in the two-dimensional Hilbert space satisfies~\eqref{axiom:nondegenerate}, so this axiom may be read as saying that nontrivial superpositions of qubits exist.

  \item[\eqref{axiom:simple}]
  The only vector subspaces of the base field are the zero-dimensional one and the whole space itself.

  \item[\eqref{axiom:separator}] 
  Linear contractions are continuous, and the linear span of pure tensor elements $x \otimes y$ is dense in $\H \otimes \K$, so morphisms in $\cat{Con}$ out of a tensor product are indeed determined by their action on pure tensor elements.

  \item[\eqref{axiom:equalisers}]
  If $S,T \colon \H \to \K$ are linear contractions, then $\{x \in \H \mid Sx=Tx\}$ is a closed subspace of $\H$ and hence an object of $\cat{Con}$, and the inclusion is automatically an isometry.

  \item[\eqref{axiom:kernels}]
  Any closed subspace $\mathcal{N} \subseteq \H$ is a kernel of the orthogonal projection $\inr^\dag \colon \H \simeq \mathcal{N} \oplus \mathcal{N}^\perp \to \mathcal{N}^\perp$ onto its orthocomplement, which is a contraction.

\end{enumerate}

The last two axioms are perhaps a bit less standard, and we spell out proofs.

\begin{lemma}
  The category $\cat{Con}$ satisfies~\eqref{axiom:positive}.
\end{lemma}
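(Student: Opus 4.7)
The plan is to split the biconditional into its two directions. First I would record two preliminary observations about $\cat{Con}$: an isomorphism is automatically unitary, since both $t$ and $t^{-1}$ being contractions forces the chain $\|tx\| \leq \|x\| = \|t^{-1}(tx)\| \leq \|tx\|$ for every $x$; and a monomorphism is precisely an injective contraction, since scaling any distinguishing pair of vectors into the unit ball yields distinct morphisms $I \to H$ that a non-injective map fails to separate.

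The forward direction is then a one-line calculation: if $r = s \comp t$ with $t$ an isomorphism, then $t$ is unitary so $tt^\dagger = \id$, giving $rr^\dagger = s(tt^\dagger)s^\dagger = ss^\dagger$.

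For the backward direction, given monomorphisms $r \colon H \rightarrowtail L$ and $s \colon K \rightarrowtail L$ with $rr^\dagger = ss^\dagger$, I would pair each $x \in L$ with itself under this operator to obtain the crucial identity $\|r^\dagger x\| = \|s^\dagger x\|$. Because $r$ is injective, the image of $r^\dagger$ is dense in $H$ (its orthogonal complement equals $\ker r = 0$), and likewise for $s^\dagger$ in $K$. I would then define a linear isometry $t_0$ from the image of $r^\dagger$ into $K$ by $t_0(r^\dagger x) := s^\dagger x$, which is well-defined and norm-preserving precisely by the identity applied to differences $x - x'$. Extending by continuity yields a linear isometry $t \colon H \to K$ whose image contains the dense subspace $s^\dagger(L)$; since isometries have closed image, $t$ is surjective and hence unitary. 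Finally, on the dense subspace $r^\dagger(L)$ we have $s(t(r^\dagger x)) = ss^\dagger x = rr^\dagger x = r(r^\dagger x)$, and continuity propagates $s \comp t = r$ to all of $H$.

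The only real obstacle is confirming that $t_0$ is well-defined on the image of $r^\dagger$, and this is immediate from the norm identity applied to $x - x'$. The rest is standard manipulation with dense-range maps between Hilbert spaces, so I do not expect any genuine difficulty.
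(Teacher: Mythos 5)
Your proof is correct, and the backward direction takes a genuinely different route from the paper's. The paper embeds $R$ and $S$ as operators on $\H \oplus \K \oplus \L$, forms the common absolute value $P = (RR^\dagger)^{1/2} = (SS^\dagger)^{1/2}$, takes polar decompositions $R = PU$ and $S = PV$, argues that injectivity forces $U$ and $V$ to be isometries with equal range projections, and sets $T = V^\dagger U$. You instead extract the scalar identity $\|r^\dagger x\| = \|s^\dagger x\|$ directly from $rr^\dagger = ss^\dagger$, use injectivity of $r$ and $s$ only to see that $r^\dagger$ and $s^\dagger$ have dense range, and build the unitary by extending the isometry $r^\dagger x \mapsto s^\dagger x$ by continuity. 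Your argument is more elementary --- it needs no functional calculus, square roots, or bookkeeping about partial isometries and range projections --- and it makes the role of the hypothesis $rr^\dagger = ss^\dagger$ transparent as a single norm identity. What the paper's polar-decomposition route buys is a more structural picture (the isomorphism $t$ appears as the composite of the two polar isometries), which mirrors arguments used elsewhere in operator theory, but for this lemma your direct construction is arguably cleaner. Your preliminary observations (isomorphisms in $\cat{Con}$ are unitary; monomorphisms are exactly the injective contractions) match what the paper uses, the first explicitly and the second implicitly.
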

\begin{proof}
  Observe that any isomorphism in $\cat{Con}$ is unitary: if $\|T\| \leq 1$ and $\|T^{-1}\| \leq 1$, then $\|x\|=\|T^{-1}(T(x))\| \leq \|Tx\| \leq \|x\|$ for all vectors $x$, so $T$ is an isometry and hence unitary.
  This makes one direction of~\eqref{axiom:positive} clear: if $R=S\comp T$ for unitary $T$, then $R\comp R^\dag = S\comp T\comp T^\dag\comp S^\dag = S\comp S^\dag$.
  The other direction follows directly from Douglas' Lemma~\cite{douglas1966MajorizationFactorizationRange}.
\end{proof}

\begin{lemma}
  The category $\cat{Con}$ satisfies~\eqref{axiom:colimits}.
\end{lemma}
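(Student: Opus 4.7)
The plan is to construct the colimit as the Hilbert-space completion of the algebraic colimit of the underlying vector spaces, equipped with a semi-inner product arising from a monotone operator limit. Given a directed diagram $(H_j, f_{jk} \colon H_j \to H_k)_{j \leq k}$ in $\cat{Con}$, I would first form the algebraic colimit $V = \varinjlim_j H_j$ in the category of vector spaces, with canonical maps $\varphi_j \colon H_j \to V$. By directedness, every $v \in V$ has the form $\varphi_j(x)$, and any finite set of elements can be represented on a common index.

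The key step is to equip $V$ with a semi-inner product. Fix $j$ and set $P_k = f_{jk}^\dag \comp f_{jk}$ on $H_j$ for $k \geq j$. For $j \leq k \leq k'$ functoriality of the diagram gives $f_{jk'} = f_{kk'} \comp f_{jk}$, and $f_{kk'}^\dag \comp f_{kk'} \leq \id_{H_k}$ because $f_{kk'}$ is a contraction, so $P_{k'} = f_{jk}^\dag (f_{kk'}^\dag f_{kk'}) f_{jk} \leq P_k$. Thus $(P_k)_{k \geq j}$ is a decreasing net of positive contractions on $H_j$, and it converges strongly to a positive contraction $P_\infty^{(j)}$ by the monotone convergence theorem for self-adjoint operators. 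I would then define
$$\inprod{\varphi_j(x)}{\varphi_j(y)} := \inprod{P_\infty^{(j)} x}{y}_{H_j} = \lim_{k \geq j} \inprod{f_{jk}(x)}{f_{jk}(y)}_{H_k},$$
extend the formula to pairs of elements by representing them on a common index, and verify that it yields a well-defined positive semidefinite sesquilinear form on $V$; independence of representatives follows because any two representations of a single element of $V$ eventually agree under the structure maps of the diagram.

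Let $N$ be the null space of this form and let $H$ be the Hilbert-space completion of $V/N$. The composites $\iota_j \colon H_j \to H$ are contractions because $\|\iota_j(x)\|^2 = \lim_k \|f_{jk}(x)\|^2 \leq \|x\|^2$, and they form a cocone over the diagram since $\varphi_k \comp f_{jk} = \varphi_j$ in $V$ by construction. For the universal property, given another cocone $(g_j \colon H_j \to K)$ of contractions with $g_k \comp f_{jk} = g_j$, the algebraic universal property of $V$ yields a unique linear $g \colon V \to K$ with $g \comp \varphi_j = g_j$; the estimate $\|g_j(x)\| = \|g_k f_{jk}(x)\| \leq \|f_{jk}(x)\|$, valid for every $k \geq j$, gives in the limit $\|g(\varphi_j(x))\| \leq \|\iota_j(x)\|_H$, so $g$ annihilates $N$, descends to a contraction $V/N \to K$, and extends uniquely to a contraction $H \to K$ by density; uniqueness of the mediating map is automatic since $V/N$ is dense in $H$.

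The main obstacle will be confirming that the limit formula actually gives a well-defined semi-inner product: this rests on the operator monotone convergence argument for the nets $(P_k)$ together with a careful bookkeeping that the limit is independent of the chosen representative $\varphi_j(x)$, which in turn uses directedness of the indexing poset in an essential way.
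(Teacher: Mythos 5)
Your proposal is correct and follows essentially the same route as the paper: form the algebraic colimit, equip it with the limiting (semi-)norm, quotient by the null space, complete, and verify the cocone and universal property by density. The only cosmetic difference is that you obtain the semi-inner product directly from the strong limit of the decreasing net $f_{jk}^\dag \comp f_{jk}$, whereas the paper defines the seminorm $\|[x_i]\| = \lim_j \|\H_{i\leq j}(x_i)\|$ and recovers the inner product via the parallelogram law; your version makes explicit the convergence argument the paper calls routine.
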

\begin{proof}
  See also~\cite[Proposition~3.1]{nikolicdipierro:colimits}.
  Let $(J,\leq)$ be a directed partially ordered set, and $\H_{i \leq j} \colon \H_i \to \H_j$ be a diagram $J \to \cat{Con}$.
  Let $V$ be the vector space $\bigcup_{j \in J} \H_j$, where $x \in \H_i$ and $y \in H_j$ are identified when $\H_{i \leq k}(x)=\H_{j \leq k}(y)$ for some $k \in J$.
  Writing $[x_i]$ for the equivalence class of $x_i \in \H_i$, define $\|[x_i]\| = \lim_j \|\H_{i \leq j}(x_i)\|$; it is routine to verify that this limit exists and defines a seminorm on $V$. (Recall that a seminorm is a function $p \colon V \to \RR$ that satisfies all properties of a norm except positive definiteness; so $p(x)=0$ need not imply $x=0$.)
  This seminorm satisfies the parallelogram law. So if $N = \{ x \in V \mid \|x\|= 0 \}$, then $V/N$ is an inner product space. Call its completion $\H_\infty$, define $\H_{j < \infty} \colon \H_j \to \H_\infty$ by $x_j \mapsto [x_j]$, and observe that this is a cocone.
  \[\begin{tikzcd}[column sep=2cm]
    {\H_i} & {\H_j} \\
    && {\H_\infty} \\
    && \K
    \arrow["{\H_{i \leq j}}", from=1-1, to=1-2]
    \arrow["{T_\infty}", dashed, from=2-3, to=3-3]
    \arrow["{T_j}"{description}, curve={height=12pt}, from=1-2, to=3-3]
    \arrow["{T_i}"{description}, curve={height=24pt}, from=1-1, to=3-3]
    \arrow["{\H_{i<\infty}}"{description,pos=0.3},halo,curve={height=6pt}, from=1-1, to=2-3]
    \arrow["{\H_{j<\infty}}"{description}, from=1-2, to=2-3]
  \end{tikzcd}\]
  Suppose that $T_j \colon \H_j \to \K$ is another cocone.
  It defines a function $\bigcup_{j \in J} \H_j \to \K$ that respects the equivalence, and so lifts to a linear function $T \colon V \to \K$.
  Furthermore, if $x_j \in \H_j$ satisfies $\|[x_j]\|=0$, then $\|T[x_j]\|=\|T[\H_{j \leq k}(x_j)]\| \leq \|\H_{j \leq k}(x_j)\| \to 0$. Thus $T$ lifts to a linear function $V/N \to \K$. Similarly, this function is a contraction, and so extends to a linear contraction $T_\infty \colon \H_\infty \to \K$.
  By construction $T_\infty(\H_{j<\infty}(x_j))=T_\infty[x_j]=T_j(x_j)$ for all $x_j \in \H_j$, and so $T_\infty$ is a mediating map from the cocone $\{\H_{j<\infty}\}_{j \in J}$ to the cocone $\{T_j\}_{j \in J}$. Finally, this mediating map is unique, because the $\H_{j<\infty}$ have jointly dense range in $\H_\infty$ by construction.
\end{proof}

Axioms~\eqref{axiom:positive} and~\eqref{axiom:colimits} are the only ones that hold in $\cat{Con}$ but not in $\cat{Hilb}$. The behaviour following from these two axioms accounts for the difference between the two categories.

We end this section by determining the subobjects of $I$ in $\cat{Con}$.
Contrast the following lemma to the situation $\cat{Hilb}$, where the subobjects of $I$ are $\{0,1\}$; the crucial difference is that scalars $0<z<1$ are invertible in $\cat{Hilb}$ but not in $\cat{Con}$.

\begin{lemma}\label{lem:scalarsinCon}
  There is an order isomorphism between the subobjects of $I$ in $\cat{Con}$ and the real unit interval $[0,1]$.
\end{lemma}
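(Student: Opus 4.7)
The plan is to describe subobjects of $I$ concretely in $\mathbf{Con}$ and then match the factoring order with the usual order on $[0,1]$. Because $I$ is one-dimensional, any monomorphism $S \rightarrowtail I$ in $\mathbf{Con}$ is an injective linear contraction, which forces $\dim S \leq 1$. Thus $S$ is either the zero Hilbert space, yielding the bottom subobject, or isomorphic to the base field $\mathbb{F}$, in which case the contraction is multiplication by some scalar $z$ with $0 < |z| \leq 1$.

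Next I would determine when two such monomorphisms define the same subobject. As already observed in the proof that $\mathbf{Con}$ satisfies~\eqref{axiom:positive}, every isomorphism in $\mathbf{Con}$ is a dagger isomorphism, so the automorphisms of $\mathbb{F}$ in $\mathbf{Con}$ are precisely multiplication by scalars of modulus $1$. Hence two nonzero monomorphisms represented by scalars $z$ and $w$ represent the same subobject if and only if $z = w u$ for some $|u| = 1$, equivalently $|z| = |w|$. This yields a well-defined map $\phi \colon \mathrm{Sub}(I) \to [0,1]$ sending the zero subobject to $0$ and $[z]$ to $|z|$, which is injective by construction and surjective because any $r \in (0,1]$ is realised by multiplication by $r$.

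It remains to verify order-preservation. By definition $[z] \leq [w]$ in $\mathrm{Sub}(I)$ iff $z$ factors through $w$ via some morphism $t$ of $\mathbf{Con}$, i.e.\ a contraction $t \colon \mathbb{F} \to \mathbb{F}$ with $z = w \comp t$. For $w \neq 0$ this forces $t = z/w$ and the contraction condition $|t| \leq 1$ becomes $|z| \leq |w|$; the zero subobject is obviously below every other. Thus $\phi$ is an order isomorphism.

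The only nontrivial input is the identification of automorphisms of $\mathbb{F}$ in $\mathbf{Con}$ with the modulus-one scalars, which is already available from the preceding lemma; the rest is routine bookkeeping with one-dimensional Hilbert spaces, so I do not anticipate any real obstacle.
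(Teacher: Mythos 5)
Your proposal is correct and follows essentially the same route as the paper: classify monomorphisms into $I$ by noting the domain must have dimension $0$ or $1$, identify nonzero ones up to precomposition with modulus-one automorphisms to get $(0,1]$, and check the factoring order matches the usual order. You merely spell out the details that the paper leaves as ``up to isomorphism of subobjects'' and ``clearly an order isomorphism.''
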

\begin{proof}
  Consider a monomorphism of $\cat{Con}$ into the base field $I$.
  It is an injective linear contraction $T \colon \H \to I$. The kernel of $T$ is zero, and so $\dim \H = 0$ or $\dim \H = 1$. Hence a subobject of $I$ is represented by the unique morphism $0 \to I$ or by an injective contraction $I \to I$. Up to isomorphism of subobjects, the latter morphisms are scalars in the interval $(0,1]$. Hence the subobjects of $I$ in $\cat{Con}$ are canonically in bijection with the closed unit interval $[0,1]$. This bijection is clearly an order isomorphism.     
\end{proof}

\section{The basic lemmas}\label{sec:basics}

From now on, we assume a (locally small) category $\cat{D}$ that satisfies the axioms~\eqref{axiom:dagger}--\eqref{axiom:colimits}, and set out to prove that $\cat{D} \simeq \cat{Con}$. 
In this section we derive some basic properties of $\cat{D}$.
We start by recalling a factorisation that already follows from~\eqref{axiom:equalisers} and~\eqref{axiom:kernels} alone~\cite{heunenjacobs:kernels}. We summarise a proof here for convenience.

\begin{lemma}\label{lem:factorisation}
  Any morphism $t \colon H \to K$ factors as an epimorphism $e \colon H \to E$ followed by a dagger monomorphism $k=\ker(\ker(t^\dag)^\dag) \colon E \to K$.
\end{lemma}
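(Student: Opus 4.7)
The plan is to follow the standard image-factorisation argument for dagger kernel categories~\cite{heunenjacobs:kernels}, which uses only axioms~\eqref{axiom:equalisers} and~\eqref{axiom:kernels}. The candidate $k$ is given to us by the statement; the real content is to check (a) that $t$ does factor through $k$, and (b) that the resulting $e$ is epic.

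For (a), axioms~\eqref{axiom:equalisers} and~\eqref{axiom:kernels} together ensure that every morphism $f$ has a dagger kernel $\ker(f)$, namely the dagger equaliser of $f$ and $0$, and conversely that every dagger monomorphism arises as such a dagger kernel. Set $n = \ker(t^\dag) \colon N \to K$ and $k = \ker(n^\dag) \colon E \to K$. From $t^\dag \comp n = 0$ we obtain $n^\dag \comp t = 0$ by daggering, so the universal property of $k = \ker(n^\dag)$ produces a unique $e \colon H \to E$ with $t = k \comp e$.

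For (b), suppose $a \comp e = b \comp e$ for some parallel pair $a, b \colon E \to L$. Let $i \colon I \to E$ be the dagger equaliser of $a$ and $b$ (axiom~\eqref{axiom:equalisers}), through which $e$ factors as $e = i \comp e'$. The composite $k \comp i$ is itself a dagger monomorphism, hence by axiom~\eqref{axiom:kernels} a dagger kernel: $k \comp i = \ker(g)$ for some morphism $g$ out of $K$. Then $g \comp t = g \comp k \comp i \comp e' = 0$, so $t^\dag \comp g^\dag = 0$, and therefore $g^\dag$ factors through $n = \ker(t^\dag)$, say $g^\dag = n \comp h$. Consequently $g \comp k = h^\dag \comp n^\dag \comp k = 0$, using $n^\dag \comp k = 0$ from the definition of $k$. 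This lets $k$ factor through $k \comp i = \ker(g)$, giving $j \colon E \to I$ with $k \comp i \comp j = k$. Cancelling the monomorphism $k$ leaves $i \comp j = \id_E$, so $i$ is split epic; being also monic as an equaliser, $i$ is an isomorphism, and therefore $a = b$.

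The main obstacle is orchestrating the two invocations of axiom~\eqref{axiom:kernels}: once to realise $k$ itself as a dagger kernel so that $t$ can be factored through it, and once to realise the composite $k \comp i$ as a dagger kernel so that the hypothesis $a \comp e = b \comp e$ can be transported into an equation between subobjects of $K$. The intervening dagger flips and universal-property chases are mechanical, but they are precisely the mechanism by which the "double perp" $\ker(\ker(t^\dag)^\dag)$ is recognised as the categorical image of $t$.
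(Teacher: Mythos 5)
Your proof is correct and follows essentially the same route as the paper: factor $t$ through $k=\ker(\ker(t^\dag)^\dag)$ via the kernel's universal property, then show $e$ is epic by taking the dagger equaliser of the parallel pair, realising the relevant dagger monomorphism as a dagger kernel via axiom~(\ref{axiom:kernels}), and exploiting $\ker(t^\dag)^\dag \comp k = 0$ to force that equaliser to be invertible. The only differences are cosmetic: the paper routes the last step through a cokernel of the equaliser and an auxiliary claim ($s \comp e = 0 \Rightarrow s = 0$), whereas you realise $k \comp i$ directly as $\ker(g)$ and split $i$; also, your choice of $I$ for the equaliser's domain clashes with the tensor unit and should be renamed.
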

\begin{proof}
  By~\eqref{axiom:dagger}, $\ker(t^\dag)^\dag$ is a cokernel of $t$, so that $\ker(t^\dag)^\dag \comp t=0$, and $t$ always factors through $k$ via some morphism $e$. It remains to show that $e$ is an epimorphism.
  
  We first prove that if $s \comp e=0$, then $s=0 \colon E \to L$. Observe that $t^\dag \comp k \comp s^\dag = e^\dag \comp s^\dag = 0$. Hence $k \comp s^\dag$ factors through $\ker(t^\dag)$ via some $r \colon L \to F$. 
  \[\begin{tikzpicture}[xscale=4,yscale=1.5]
    \node (f) at (0,1) {$\smult$};
    \node (k) at (1,1) {$K$};
    \node (h) at (2,1) {$H$};
    \node (l) at (0,0) {$L$};
    \draw[->] (f) to node[above]{$\ker(t^\dag)$} (k);
    \draw[->] ([yshift=.5mm]k.east) to node[above]{$t^\dag$} ([yshift=.5mm]h.west);
    \draw[->] ([yshift=-.5mm]k.east) to node[below]{$0$} ([yshift=-.5mm]h.west); 
    \draw[->] (l) to node[below]{$k \comp s^\dag$} (k);
    \draw[->,dashed] (l) to node[left]{$r$} (f);
  \end{tikzpicture}\]
  But $r=\ker(t^\dag)^\dag \comp k \comp s^\dag = 0$, so $k \comp s^\dag = 0$, and so $s=0$.

  Next, we prove that $e$ is an epimorphism. Suppose that $s \comp e = s' \comp e$ for morphisms $s,s' \colon E \to L$. Let $m$ be a dagger equaliser of $s$ and $s'$. Then $e$ factors through $m$, and, writing $\coker(m)$ for a cokernel of $m$, we infer $\coker(m) \comp e = 0$.
  \[\begin{tikzpicture}[xscale=4,yscale=1.5]
    \node (h) at (0,1) {$H$};
    \node (e) at (1,1) {$E$};
    \node (l) at (2,1) {$L$};
    \node (f) at (0,0) {$\smult$};
    \node (g) at (1,0) {$\smult$};
    \draw[->] (h) to node[above]{$e$} (e);
    \draw[->] ([yshift=.5mm]e.east) to node[above]{$s$} ([yshift=.5mm]l.west);
    \draw[->] ([yshift=-.5mm]e.east) to node[below]{$s'$} ([yshift=-.5mm]l.west);
    \draw[->] (f) to node[below]{$m$} (e);
    \draw[->,dashed] (h) to (f);
    \draw[->] (e) to node[right]{$\coker(m)$} (g);
  \end{tikzpicture}\]
  But by the property we proved earlier, that means $\coker(m)=0$, and so $m$ is invertible. We conclude that $s=s'$.
\end{proof}

Next we notice a consequence of axiom~\eqref{axiom:simple}.
A \emph{scalar} in a monoidal category is a morphism $z \colon I \to I$, and we can multiply any morphism $t \colon H \to K$ by it to obtain a morphism $H \simeq I \otimes H \stackrel{z \otimes t}{\longrightarrow} I \otimes K \simeq K$ that we denote $z \smult t$~\cite[2.1]{heunenvicary}. 
For a scalar $z \colon I \to I$ and morphisms $s \colon H \to K$ and $t \colon K \to L$, it follows from the bifunctoriality of the tensor product that $z \smult (g \comp f) = (z \smult g)\comp f = g \comp (z \smult f)$. In particular, $w \smult z = w \comp z$ for scalars $w,z \colon I \to I$. Instead of $\id_I$ we may write $1$ for the identity scalar.

\begin{lemma}\label{lem:scalarsmonic}
  Every nonzero scalar is monic and epic.
\end{lemma}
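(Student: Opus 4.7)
The plan is to apply the epi--dagger-mono factorisation from Lemma~\ref{lem:factorisation} to the scalar $z$, use axiom~\eqref{axiom:simple} to sharpen it into a zero-or-invertible dichotomy, and then invoke dagger symmetry to upgrade the resulting epicness to monicness.

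Concretely, I first factor $z \colon I \to I$ as $z = k \comp e$ with $e \colon I \to E$ epic and $k \colon E \to I$ a dagger monomorphism. Because $k$ has codomain $I$, axiom~\eqref{axiom:simple} forces $k$ to be either invertible or equal to $0$. The possibility $k = 0$ cannot occur: since $0$ is a zero object by~\eqref{axiom:affine}, zero morphisms absorb composition, so $k = 0$ would yield $z = k \comp e = 0$, contradicting the hypothesis. Hence $k$ is invertible, and $z$, being the composite of an epimorphism with an isomorphism, is itself epic.

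For monicity I apply the same argument to $z^\dag$. The contravariant involution $\dagger$ is a bijection on hom-sets that sends zero morphisms to zero morphisms (the zero morphism factors through the zero object, and daggering this factorisation gives the zero morphism in the opposite direction), so $z^\dag$ is nonzero whenever $z$ is. The previous paragraph then yields that $z^\dag$ is epic. In any dagger category, $f$ is monic precisely when $f^\dag$ is epic, since daggering $f \comp a = f \comp b$ turns it into $a^\dag \comp f^\dag = b^\dag \comp f^\dag$ and cancellation on one side is equivalent to cancellation on the other. Thus $z$ is monic, completing the argument.

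There is no real obstacle here; the content of the proof is really the observation that~\eqref{axiom:simple} upgrades Lemma~\ref{lem:factorisation} into a sharp dichotomy (scalars are either zero or have their dagger-mono half invertible), after which the dagger handles the symmetric half of the statement automatically. The only point worth flagging is the innocuous verification that $k = 0$ actually forces $z = 0$, which relies on $0$ being a zero object rather than merely initial.
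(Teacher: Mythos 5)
Your proof is correct and follows the same route as the paper's: factor the scalar via Lemma~\ref{lem:factorisation}, use axiom~(\ref{axiom:simple}) to force the dagger-monic part to be invertible (it cannot be $0$ since $z \neq 0$), conclude $z$ is epic, and then dualise via the dagger to get monicness. The extra details you supply (that $k=0$ forces $z=0$, and that the dagger exchanges monos and epis) are exactly the steps the paper leaves implicit.
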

\begin{proof}
  Let $z \colon I \to I$ be nonzero.
  Lemma~\ref{lem:factorisation} factors it as an epimorphism followed by a dagger monomorphism. Axiom~\eqref{axiom:simple} guarantees that that dagger monomorphism is either 0 or invertible. By assumption it cannot be zero, and so $z$ is epic. Similarly, $z^\dag$ is epic, and so $z$ is also monic.
\end{proof}

It follows that every dagger monic scalar is invertible, for it cannot be zero, so must be epic as well as split monic.

\begin{lemma}\label{lem:scalarmultiplicationmonic}
 For all nonzero scalars $z$ and morphisms $s,t \colon H \to K$, if $z \smult s = z \smult t$, then $s=t$.
\end{lemma}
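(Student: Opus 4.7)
The plan is to cancel $z$ by first reducing to ``scalar-like'' morphisms $I \to K$, where scalar multiplication by $z$ collapses to postcomposition by $z$, so the claim follows from $z$ being epic (Lemma~\ref{lem:scalarsmonic}); then lifting the equality back to general morphisms by invoking the monoidal separator axiom~\eqref{axiom:separator}.

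For the first step, fix any $y \colon I \to H$ and postcompose the hypothesis by $y$ to obtain $(z \smult s) \comp y = (z \smult t) \comp y$. By the bifunctoriality identities recorded before Lemma~\ref{lem:scalarsmonic}, this reads $z \smult (s \comp y) = z \smult (t \comp y)$. For any morphism $f \colon I \to K$, we have $z \smult f = f \comp (z \smult \id_I) = f \comp z$, because $z \smult \id_I = z$ (treating $\id_I$ as the scalar $1$, $z \smult \id_I = z \comp \id_I = z$). Hence $(s \comp y) \comp z = (t \comp y) \comp z$, and cancelling the epimorphism $z$ yields $s \comp y = t \comp y$ for every $y \colon I \to H$.

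For the second step, it suffices to show $s \comp \lambda_H = t \comp \lambda_H$ as morphisms $I \otimes H \to K$, and by~\eqref{axiom:separator} it is enough to compare these after precomposition with $x \otimes y$ for arbitrary $x \colon I \to I$ and $y \colon I \to H$. A short coherence computation, using $\lambda_I = \rho_I$ together with naturality of $\lambda$ and $\rho$, gives $\lambda_H \comp (x \otimes y) = y \comp x \comp \lambda_I$, so the required equality reduces to $(s \comp y) \comp x \comp \lambda_I = (t \comp y) \comp x \comp \lambda_I$, which is immediate from the previous step. The main obstacle is precisely this bridging step: axiom~\eqref{axiom:separator} tests morphisms out of a tensor product, so one must artificially expose a tensor unit via $\lambda_H$ to apply it; once this naturality bookkeeping is dispatched, the argument is essentially the single line ``$z$ is epic, therefore cancellable''.
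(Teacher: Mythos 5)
Your proof is correct and takes essentially the same route as the paper's: reduce the equation to generalized elements using the monoidal separator axiom~\eqref{axiom:separator}, then cancel $z$ via Lemma~\ref{lem:scalarsmonic}. The only cosmetic difference is that the paper applies the separator twice (using the dagger for the second pass) to descend all the way to scalars and cancels $z$ as a monomorphism, whereas you stop at points $I \to K$ and cancel $z$ as an epimorphism.
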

\begin{proof}
 By two applications of axiom~\eqref{axiom:separator}, it suffices to prove that $y^\dag \comp s \comp x = y^\dag \comp t \comp x$ for all $x \colon I \to H$ and $y \colon I \to K$.
 But $z \comp (y^\dag \comp s \comp x) = z \smult (y^\dag \comp s \comp x) = y^\dag \comp (z \smult s) \comp x = y^\dag \comp (z \smult t) \comp x = z \smult (y^\dag \comp t \comp x) = z \comp (y^\dag \comp t \comp x)$, and the scalar $z$ is monic by Lemma~\ref{lem:scalarsmonic}.
\end{proof}

The following two lemmas are consequences of axiom~\eqref{axiom:positive}.

\begin{lemma}\label{lem:daggeriso}
  Any isomorphism is a dagger isomorphism. 
  Any split monomorphism is a dagger monomorphism.
\end{lemma}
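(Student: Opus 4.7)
The plan is to reduce both claims to axiom~\eqref{axiom:positive} combined with the epi--dagger-mono factorisation of Lemma~\ref{lem:factorisation}. I would handle the isomorphism claim first, since it will serve as a subroutine for the split-monomorphism claim.

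For an isomorphism $f \colon H \to K$, the key move is to apply the ``only if'' direction of axiom~\eqref{axiom:positive} to the two monomorphisms $f \colon H \to K$ and $\id_K \colon K \to K$ (both landing in $K$). The trivial equation $f = \id_K \comp f$ displays the first as the second post-composed with the isomorphism $f$, so the axiom delivers $f \comp f^\dag = \id_K \comp \id_K^\dag = \id_K$. Hence $f^\dag$ is a right inverse of the iso $f$; by uniqueness of two-sided inverses $f^\dag = f^{-1}$, so $f^\dag \comp f = \id_H$ as well, and $f$ is a dagger isomorphism.

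For a split monomorphism $m \colon H \to K$ with retraction $r \colon K \to H$, I would factor $m = k \comp e$ via Lemma~\ref{lem:factorisation}, with $e$ epic and $k$ a dagger monomorphism. The composite $r \comp k$ then left-inverts $e$, so $e$ is simultaneously epic and split monic; a one-line argument using the epic property upgrades the one-sided inverse to a two-sided one, so $e$ is an isomorphism. The first part then promotes $e$ to a dagger isomorphism, and $m^\dag \comp m = e^\dag \comp k^\dag \comp k \comp e = e^\dag \comp e = \id_H$ shows that $m$ is a dagger monomorphism.

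The only creative step is recognising that axiom~\eqref{axiom:positive} is meant to be applied with $\id_K$ as one of the two monomorphisms; after that, both halves of the lemma reduce to bookkeeping about inverses and the factorisation. A minor nuisance is verifying that an epic, split-monic arrow is an isomorphism, but this is standard and purely formal.
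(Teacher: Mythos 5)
Your proposal is correct and follows essentially the same route as the paper: applying axiom~(\ref{axiom:positive}) to the pair $(f,\id_K)$ with the isomorphism $f$ itself as the mediating map to get $f\comp f^\dag=\id$, and then using the factorisation of Lemma~\ref{lem:factorisation} together with the standard fact that an epic split monomorphism is an isomorphism. No gaps.
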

\begin{proof}
  Let $r \colon H \to L$ be an isomorphism. Taking $s=\id_L$ and $t=r$ in~\eqref{axiom:positive} shows that $r \comp r^\dag = \id \comp \id^\dag = \id$. Hence $r^{-1} = r^{-1} \comp \id_L = r^{-1} \comp r \comp r^\dag = r^\dag$.

  Now suppose that $t \colon H \to K$ is a split monomorphism. Factor $t = k \comp e$ for a dagger monomorphism $k \colon E \to K$ and an epimorphism $e \colon H \to E$. Then the epimorphism $e$ is itself split monic. But in any category, a split monic epimorphism is an isomorphism, and hence $e$ is a dagger isomorphism. Therefore, $t$ is a dagger monomorphism.
\end{proof}

\begin{lemma}\label{lem:daggermono}
  If a scalar $z$ and a morphism $t\colon H \to K$ satisfy $t^\dag \comp t = z^\dag \smult z \smult \id_H$, then $t=z \smult s$ for a dagger monomorphism $s \colon H \to K$.
\end{lemma}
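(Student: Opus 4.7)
The plan is to use the dagger factorisation from Lemma~\ref{lem:factorisation} to isolate the ``image'' of $t$, and then to identify that image as a scaled isomorphism via axiom~\eqref{axiom:positive}. Assume first that $z \neq 0$; the case $z = 0$ will be addressed at the end. Factor $t = k \comp e$ via Lemma~\ref{lem:factorisation}, with $e \colon H \to E$ epic and $k \colon E \to K$ a dagger monomorphism. Then
\[ e^\dag \comp e = e^\dag \comp k^\dag \comp k \comp e = t^\dag \comp t = z^\dag z \smult \id_H. \]

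Before invoking axiom~\eqref{axiom:positive} I would establish two auxiliary facts. First, $z^\dag \comp z \neq 0$: by Lemma~\ref{lem:scalarsmonic} the nonzero scalar $z^\dag$ is monic, so $z^\dag \comp z = 0$ would force $z = 0$. Combined with Lemma~\ref{lem:scalarmultiplicationmonic} this makes $e$ a monomorphism, since $e \comp x = e \comp y$ implies $z^\dag z \smult x = e^\dag \comp e \comp x = e^\dag \comp e \comp y = z^\dag z \smult y$ and hence $x = y$. Second, using that $e$ is epic, I cancel $e$ from the identity $(e \comp e^\dag) \comp e = e \comp (e^\dag \comp e) = z^\dag z \smult e = (z^\dag z \smult \id_E) \comp e$ to obtain $e \comp e^\dag = z^\dag z \smult \id_E$.

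Now I would apply axiom~\eqref{axiom:positive} to the two monomorphisms $e \colon H \to E$ and $z \smult \id_E \colon E \to E$ (the latter monic by Lemma~\ref{lem:scalarmultiplicationmonic}), noting that $e \comp e^\dag = z^\dag z \smult \id_E = (z \smult \id_E) \comp (z \smult \id_E)^\dag$. This yields an isomorphism $u \colon H \to E$ with $e = (z \smult \id_E) \comp u = z \smult u$. Setting $s = k \comp u \colon H \to K$, I get $t = k \comp e = z \smult (k \comp u) = z \smult s$. Finally, Lemma~\ref{lem:daggeriso} upgrades $u$ to a dagger isomorphism, so $s^\dag \comp s = u^\dag \comp k^\dag \comp k \comp u = u^\dag \comp u = \id_H$, confirming that $s$ is a dagger monomorphism.

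The main obstacle is recognising the correct pairing for axiom~\eqref{axiom:positive}: one must compare the factor $e$ with the scaled identity $z \smult \id_E$ rather than with $t$ itself, and this requires massaging the hypothesis $t^\dag \comp t = z^\dag z \smult \id_H$ into both the monicity of $e$ and the dual identity $e \comp e^\dag = z^\dag z \smult \id_E$. The case $z = 0$ needs a separate short argument: $t^\dag \comp t = 0$ implies $(t \comp x)^\dag \comp (t \comp x) = 0$ for every $x \colon I \to H$, whence $t \comp x = 0$ by the scalar observation above, and then $t = 0$ by axiom~\eqref{axiom:separator} applied through $H \simeq H \otimes I$; the equation $t = z \smult s$ then holds trivially for any dagger monomorphism $s$.
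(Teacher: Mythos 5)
Your proof is correct and follows essentially the same route as the paper: factor $t = k \comp e$ via Lemma~\ref{lem:factorisation}, apply axiom~\eqref{axiom:positive} to compare the epi part with a scaled identity, and upgrade the resulting isomorphism to a dagger isomorphism via Lemma~\ref{lem:daggeriso}. The only (harmless) divergence is that you feed~\eqref{axiom:positive} the pair $e$ and $z \smult \id_E$ with common codomain $E$, which costs you the extra derivations that $e$ is monic and that $e \comp e^\dag = z^\dag z \smult \id_E$, whereas the paper applies it to $e^\dag$ and $z^\dag \smult \id_H$ with common codomain $H$; your separate treatment of $z = 0$ is a point the paper's proof silently skips (and note that there your phrase ``any dagger monomorphism $s \colon H \to K$'' presumes one exists, though the lemma is only ever invoked with $z \neq 0$).
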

\begin{proof}
  Factor $t=k \comp e$ for a dagger monomorphism $k \colon E \to K$ and an epimorphism $e \colon H \to E$. Then $e^\dag \comp e = t^\dag\comp t = (z \smult \id_H)^\dag \comp (z \smult \id_H)$. Now~\eqref{axiom:positive} and Lemma~\ref{lem:daggeriso} provide a dagger isomorphism $u \colon H \to E$ such that $e=z \smult u$, and so $t=z \smult s$ for the dagger monomorphism $s=k \comp u$.   
\end{proof}

\section{The completion}\label{sec:completion}

This section contains the main construction of the proof. 
Lemmas~\ref{lem:scalarsinCon},~\ref{lem:scalarsmonic}, and~\ref{lem:daggeriso} show that scalars in $\cat{Con}$ correspond to the unit disc:
\[
  \cat{Con}_\RR(I,I) \simeq [-1,1]
  \qquad\qquad
  \cat{Con}_\CC(I,I) \simeq \{ z \in \mathbb{C} \mid |z|\leq 1\}
\]
Observe that nonzero scalars that are not on the unit circle have no inverse in $\cat{Con}$. But they do in $\cat{Hilb}$. In fact, $\cat{Hilb}$ is the localisation of $\cat{Con}$ at nonzero scalars. We now abstractly construct the localisation of a monoidal category at its nonzero scalars in a way that respects the axioms~\eqref{axiom:dagger}--\eqref{axiom:colimits}. Write $\mathcal{D}=\cat{D}(I,I)$ for the scalars of $\cat{D}$.

\begin{proposition}\label{prop:completion}
  There is a category $\cat{D}[\mathcal{D}^{-1}]$ with the same objects as $\cat{D}$, where a morphism $[t/z]$ consists of a nonzero scalar $z$ and a morphism $t$ in $\cat{D}$ modulo the following equivalence relation:
  \[
    [t/z]\sim [t'/z'] \iff z' \smult t = z \smult t'
  \]
  The identity on $H$ is $[\id_H/1]$, and composition is given by:
  \[
    [t/z] \comp [s/w] = [t \comp s / z \smult w]
  \]
  There is an embedding $\cat{D} \to \cat{D}[\mathcal{D}^{-1}]$ defined by $t \mapsto [t/1]$.
\end{proposition}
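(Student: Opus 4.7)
The construction is a standard fraction-style localisation, simplified by the fact that scalars are \emph{central}: $z \smult (g \comp f) = (z \smult g) \comp f = g \comp (z \smult f)$, as noted above Lemma~\ref{lem:scalarsmonic}. My plan is to verify in turn that (i) $\sim$ is an equivalence relation, (ii) composition descends to equivalence classes, (iii) the category axioms hold, and (iv) $t \mapsto [t/1]$ is functorial. Two preliminary points must be dealt with before anything else. First, the scalar $1 = \id_I$ must be nonzero, or else the identity morphism $[\id_H/1]$ falls outside our own data; but $1 = 0$ would force every morphism to be zero by bifunctoriality of $\otimes$, contradicting axiom~\eqref{axiom:nondegenerate}. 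Second, the product $z \smult w$ of two nonzero scalars must itself be nonzero, so that the formula for composition lands in $\cat{D}[\mathcal{D}^{-1}]$; this follows from Lemma~\ref{lem:scalarsmonic}, since $z \comp w = 0$ with $z$ monic forces $w = 0$.

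For $\sim$, reflexivity and symmetry are immediate. For transitivity, given $z' \smult t = z \smult t'$ and $z'' \smult t' = z' \smult t''$, I would scalar-multiply the first equation by $z''$ and substitute the second, using commutativity of scalar multiplication, to obtain $z' \smult z'' \smult t = z' \smult z \smult t''$, and then cancel the nonzero scalar $z'$ using Lemma~\ref{lem:scalarmultiplicationmonic}. For well-definedness of composition, suppose $[t/z] \sim [\tilde t/\tilde z]$ and $[s/w] \sim [\tilde s/\tilde w]$; applying centrality twice yields
\[
(\tilde z \smult \tilde w) \smult (t \comp s) = (\tilde z \smult t) \comp (\tilde w \smult s) = (z \smult \tilde t) \comp (w \smult \tilde s) = (z \smult w) \smult (\tilde t \comp \tilde s),
\]
which is precisely the required relation $[t \comp s / z \smult w] \sim [\tilde t \comp \tilde s / \tilde z \smult \tilde w]$.

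Associativity in $\cat{D}[\mathcal{D}^{-1}]$ then reduces to associativity in $\cat{D}$ combined with associativity of scalar multiplication (a consequence of monoidal coherence), and the identity laws follow from $1 \smult z = z \smult 1 = z$. The embedding $t \mapsto [t/1]$ preserves identities by definition and preserves composition via $1 \smult 1 = 1$. The main obstacle I anticipate is the transitivity step: it requires cancelling a scalar ($z'$) that appears on neither the numerator nor the denominator side of the target equation, and this cancellation is exactly the content of Lemma~\ref{lem:scalarmultiplicationmonic}, which itself rests on the monoidal separator axiom~\eqref{axiom:separator}. Without this cancellation one would be forced to formulate $\sim$ via Ore-style roof diagrams and verify the full calculus of fractions; centrality of scalars combined with the separator axiom is what lets us avoid that machinery entirely.
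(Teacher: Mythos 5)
Your proposal is correct and follows essentially the same route as the paper: transitivity by multiplying through and cancelling the middle denominator scalar via Lemma~\ref{lem:scalarmultiplicationmonic}, well-definedness of composition by the same two-step centrality computation, and the observation that products of nonzero scalars are nonzero because nonzero scalars are monic. Your extra preliminary check that $1 \neq 0$ is harmless added care; the only thing you leave tacit that the paper states is that $t \mapsto [t/1]$ is faithful, which is immediate from the same cancellation lemma.
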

\begin{proof}
  The relation $\sim$ is clearly reflexive and symmetric. To see that it is transitive, suppose that $[r/x] \sim [s/y]$ and $[s/y] \sim [t/z]$. Then $r \smult y = s \smult x$ and $s \smult z = t \smult y$. It follows that $s \smult r \smult z = r \smult t \smult y = s \smult t \smult x$. Lemma~\ref{lem:scalarmultiplicationmonic} now shows that $r \smult z = t \smult x$, that is, $[r/x] \sim [t/z]$. Hence $\sim$ is an equivalence relation.

  To see that the composition is well-defined, suppose that $[t/z] \sim [t'/z']$ and $[s/w] \sim [s'/w']$. Then $z' \smult w' \smult (t \comp s) = (z' \smult t) \comp (w' \smult s) = (z \smult t') \comp (w \smult s') = z \smult w \smult (t' \comp s')$, so $[t/z] \comp [s/w] \sim [t'/z'] \comp [s'/w']$. It is clearly associative and satisfies the identity laws.
  If $z$ and $w$ are nonzero scalars, then $z \smult w = z \comp w$ is nonzero because $w$ is monic by Lemma \ref{lem:scalarsmonic}.

  The assignment $t \mapsto [t/1]$ is functorial, injective on objects, and faithful.
\end{proof}

The previous proposition concretely describes a quotient category~\cite[II.8]{maclane:categories}: writing $\cat{I}$ for the one-object category of non-zero scalars of $\cat{D}$, take the quotient of $\cat{D} \times \cat{I}$ under the congruence relation $(s,w) \sim (t,z) \iff z \bullet s = w \bullet t$.

\begin{example}
  There is an isomorphism $F \colon \cat{Con}[\mathcal{D}^{-1}] \to \cat{Hilb}$ of categories. It is defined as the identity $F(\H)=\H$ on objects, and as $F[T/z]=z^{-1}T$ on morphims. It is faithful by construction, because $F[S/w]=F[T/z]$ implies $zS=wT$ and so $[S/w]\sim [T/z]$. 
  To see that it is full, let $T \colon \H \to \K$ be a bounded linear function. If $\|T\| \leq 1$, then $T=F([T/1])$. 
  If $\|T\| \geq 1$, then $\|T\|^{-1} \in (0,1]$ is a scalar in $\cat{Con}$ and $T/\|T\|$ is a contraction, so $F\big(\big[ \|T\|^{-1} T / \|T\|^{-1}  \big]\big) = T$.
\end{example}

\begin{lemma}\label{lem:monoidal}
  The category $\cat{D}[\mathcal{D}^{-1}]$ inherits monoidal structure $\otimes$ from $\cat{D}$, and the functor $\cat{D} \to \cat{D}[\mathcal{D}^{-1}]$ is strict monoidal for $\otimes$.
\end{lemma}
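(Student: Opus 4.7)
My plan is to transport $\otimes$ along the embedding $\cat{D} \to \cat{D}[\mathcal{D}^{-1}]$ and verify everything reduces to the monoidal structure of $\cat{D}$ together with the scalar calculus set up earlier.

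On objects, set $H \otimes K$ in $\cat{D}[\mathcal{D}^{-1}]$ to be the $\otimes$ from $\cat{D}$. On morphisms, define
\[
  [s/w] \otimes [t/z] \;=\; [\,s \otimes t \, / \, w \smult z\,].
\]
The denominator $w \smult z = w \comp z$ is nonzero because $z$ is monic by Lemma~\ref{lem:scalarsmonic} and $w$ is nonzero. For well-definedness, assume $[s/w]\sim[s'/w']$ and $[t/z]\sim[t'/z']$, so $w' \smult s = w \smult s'$ and $z' \smult t = z \smult t'$. Using that scalar multiplication commutes with either tensor factor (an immediate consequence of bifunctoriality of $\otimes$ in $\cat{D}$ and $w \otimes z = (w \otimes \id) \comp (\id \otimes z)$), I compute
\[
  (w' \smult z') \smult (s \otimes t) \;=\; (w' \smult s)\otimes(z' \smult t) \;=\; (w \smult s') \otimes (z \smult t') \;=\; (w \smult z) \smult (s' \otimes t'),
\]
so the two representatives agree.

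Next I verify bifunctoriality. Identities are preserved since $[\id_H/1] \otimes [\id_K/1] = [\id_{H \otimes K}/1]$. For composition,
\[
  \bigl([t_1/z_1]\comp[s_1/w_1]\bigr) \otimes \bigl([t_2/z_2]\comp[s_2/w_2]\bigr) = [(t_1 \comp s_1)\otimes(t_2 \comp s_2) \,/\, z_1 \smult w_1 \smult z_2 \smult w_2],
\]
which matches $([t_1/z_1]\otimes[t_2/z_2]) \comp ([s_1/w_1]\otimes[s_2/w_2])$ after appealing to the interchange law in $\cat{D}$ and the fact that scalars commute (scalars form a commutative monoid since $w \comp z = w \smult z = z \smult w = z \comp w$). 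The unit of $\otimes$ is still $I$.

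For the structure isomorphisms, simply push the unitors, associators, and braidings of $\cat{D}$ through the embedding: if $\alpha$ is such a dagger isomorphism in $\cat{D}$, take $[\alpha/1]$, with inverse $[\alpha^{-1}/1]$. Naturality and the pentagon/triangle/hexagon axioms follow immediately from the corresponding axioms in $\cat{D}$, since the equations involve only morphisms in the image of the embedding and the embedding preserves composition and $\otimes$ on the nose. Finally, by construction $(t \otimes s) \mapsto [t \otimes s/1] = [t/1]\otimes[s/1]$ and the structural isomorphisms are sent to themselves, so the embedding is strict monoidal. I do not expect any genuine obstacle here; the only step requiring care is the well-definedness of $\otimes$ on morphisms, where one must be careful to use that scalar multiplication distributes across either side of a tensor product.
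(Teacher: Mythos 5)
Your proposal is correct and follows essentially the same route as the paper: the paper's entire proof is the observation that $\sim$ is a monoidal congruence, i.e.\ exactly your well-definedness computation $(w' \smult z') \smult (s \otimes t) = (w \smult s')\otimes(z \smult t') = (w \smult z)\smult(s'\otimes t')$, with the remaining bifunctoriality and coherence checks delegated to the standard quotient-category machinery. You have simply written out those routine verifications explicitly.
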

\begin{proof}
  Observe that the equivalence relation of Proposition~\ref{prop:completion} is a monoidal congruence: if $[s/w] \sim [s'/w']$ and $[t/z] \sim [t'/z']$, then $[s \otimes t / w \smult z] \sim [s' \otimes t' / w' \smult z']$.
\end{proof}

Before we show that it satisfies the axioms of~\cite{heunenkornell:hilbert} one by one, let us first establish the universal property that justifies the notation $\cat{D}[\mathcal{D}^{-1}]$. Notice that any morphism $[t/z]$ in $\cat{D}[\mathcal{D}^{-1}]$ can be factored as $[t/z]=[\id/z] \comp [t/1]$.

\begin{proposition}\label{prop:universalproperty}
  Any functor $F \colon \cat{D} \to \cat{C}$ that is strong monoidal for $\otimes$ such that $F(z)$ is invertible for all nonzero scalars $z$ factors through a unique functor $\cat{D}[\mathcal{D}^{-1}] \to \cat{C}$ that is strong monoidal for $\otimes$ via the functor $\cat{D} \to \cat{D}[\mathcal{D}^{-1}]$.
  \[\begin{tikzpicture}[xscale=3,yscale=2]
    \node (c) at (0,1) {$\cat{D}$};
    \node (d) at (1,0) {$\cat{C}$};
    \node (bc) at (1,1) {$\cat{D}[\mathcal{D}^{-1}]$};
    \draw[->] (c) to (bc);
    \draw[->] (c) to node[below]{$F$} (d);
    \draw[->,dashed] (bc) to (d);
  \end{tikzpicture}\]
\end{proposition}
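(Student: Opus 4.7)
The plan is to define the factorization $\tilde{F}\colon \cat{D}[\mathcal{D}^{-1}] \to \cat{C}$ by $\tilde{F}(H) = F(H)$ on objects and
\[
  \tilde{F}[t/z] \;=\; F(z)^{-1} \smult F(t)
\]
on morphisms. Here the strong monoidal coherence isomorphism $\phi_0\colon I_{\cat{C}} \xrightarrow{\sim} F(I)$ lets us view $F(z)$ as a scalar in $\cat{C}$, which is invertible by hypothesis, and $\smult$ denotes scalar multiplication in $\cat{C}$. Composed with $t \mapsto [t/1]$, this recovers $F(t)$, since $F$ sends the identity scalar to the identity; so the triangle commutes.

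The verifications fall into four routine pieces. (i) \emph{Well-definedness on $\sim$-classes}: if $z' \smult t = z \smult t'$, then applying $F$ and using that a strong monoidal functor preserves scalar multiplication yields $F(z') \smult F(t) = F(z) \smult F(t')$; multiplying by the invertible scalar $(F(z) \comp F(z'))^{-1}$ gives $F(z)^{-1} \smult F(t) = F(z')^{-1} \smult F(t')$. (ii) \emph{Functoriality}: preservation of identities is immediate, and preservation of composition reduces to the general identity $(a \smult f)\comp(b \smult g) = (a \comp b) \smult (f \comp g)$ in $\cat{C}$, together with $F(z \smult w) = F(z) \comp F(w)$. (iii) \emph{Strong monoidality for $\otimes$}: reuse the structure isomorphisms $\phi_0$ and $\phi_{H,K}$ of $F$; Lemma~\ref{lem:monoidal} gives $[s/w] \otimes [t/z] = [s \otimes t / w \smult z]$, which matches $\tilde{F}[s/w] \otimes \tilde{F}[t/z]$ after a short scalar calculation using $(F(w) \comp F(z))^{-1} = F(z)^{-1} \comp F(w)^{-1}$. (iv) \emph{Uniqueness}: every morphism of $\cat{D}[\mathcal{D}^{-1}]$ factors as $[t/z] = [\id/z] \comp [t/1]$, so any functor $G$ extending $F$ via the embedding must satisfy $G[t/1] = F(t)$; and the identity $[\id/z] \comp [z/1] = [\id/1]$ forces $G[\id/z]$ to be the scalar inverse of $F(z)$. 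Hence $G$ is determined and equals $\tilde{F}$.

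The main obstacle is the coherence bookkeeping in step (iii): tracking the isomorphisms $\phi_0$ and $\phi_{H,K}$ through the scalar-inversion manipulations so that $\tilde{F}$ satisfies the strong monoidal axioms is the only step that goes beyond purely formal rewriting. Everything else flows from the fact that scalars in a monoidal category form a commutative monoid under composition and that strong monoidal functors induce monoid homomorphisms on scalars, which automatically send inverses to inverses.
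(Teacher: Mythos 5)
Your proposal is correct and follows essentially the same route as the paper: the same definition $G[t/z]=F(z)^{-1}\smult F(t)$ and the same uniqueness argument via the factorisation $[t/z]=[\id/z]\comp[t/1]$, which forces the value on $[\id/z]$ to be the inverse of $F(z)\smult\id$. You merely spell out the well-definedness, functoriality, and coherence checks that the paper leaves implicit.
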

\begin{proof}
  Define the functor $G \colon \cat{D}[\mathcal{D}^{-1}] \to \cat{C}$ by $G(H)=F(H)$ on objects, and by $G[t/z]=F(z)^{-1} \smult F(t)$ on morphisms. This is the only functor making the triangle commute, because it is completely determined by its values at $[t/1]$ and $[\id/z]$ for all morphisms $t$ and all scalars $z$ in $\cat{D}$.
\end{proof}

Thus $\cat{D}[\mathcal{D}^{-1}]$ is the localisation of $\cat{D}$ at all nonzero scalars: it formally adjoins inverses for all nonzero scalars to $\cat{D}$. The concrete description of Proposition~\ref{prop:completion} simplifies the general construction for localisation~\cite{kashiwaraschapira:sheaves}. 

\begin{lemma}\label{lem:dagger}
  The category $\cat{D}[\mathcal{D}^{-1}]$ has a dagger, and the embedding $\cat{D} \to \cat{D}[\mathcal{D}^{-1}]$ preserves it. The embedding restricts to an isomorphism between the wide subcategories of dagger monomorphisms in $\cat{D}$ and $\cat{D}[\mathcal{D}^{-1}]$.
\end{lemma}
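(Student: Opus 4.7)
The plan is to define the dagger on $\cat{D}[\mathcal{D}^{-1}]$ by $[t/z]^\dag := [t^\dag / z^\dag]$. First I would check this makes sense. The zero morphism on $I$ is self-dagger (it factors through the zero object, and dagger swaps the direction of the two unique maps involved), so if $z \neq 0$ then $z^\dag \neq 0$ by involutivity of $\dagger$, meaning the denominator stays a legitimate scalar. Well-definedness on equivalence classes follows by applying $\dagger$ to $z' \smult t = z \smult t'$ once we know $(w \smult s)^\dag = w^\dag \smult s^\dag$, which is immediate from axiom~\eqref{axiom:rig}: the unitor $\lambda$ is a dagger isomorphism, and $(w \otimes s)^\dag = w^\dag \otimes s^\dag$.

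Next I would check the three dagger axioms. Identity preservation is $[\id_H/1]^\dag = [\id_H/1]$, and involutivity is trivial since $\dag$ is involutive on both $t$ and $z$. Contravariance reduces to checking $(z \smult w)^\dag = z^\dag \smult w^\dag$ for scalars (as opposed to the reversed order $w^\dag \smult z^\dag$), which holds because scalars commute under composition via the usual Eckmann--Hilton argument in a monoidal category. The embedding $t \mapsto [t/1]$ preserves the dagger because $1^\dag = \id_I^\dag = \id_I = 1$, so $[t/1]^\dag = [t^\dag / 1]$.

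For the isomorphism between wide subcategories of dagger monomorphisms, the embedding sends any dagger monomorphism $s$ of $\cat{D}$ to one in $\cat{D}[\mathcal{D}^{-1}]$, since $[s/1]^\dag \comp [s/1] = [s^\dag \comp s / 1] = [\id / 1]$; faithfulness is already covered by Proposition~\ref{prop:completion}. The crux is fullness. Given a dagger monomorphism $[t/z] \colon H \to K$ in $\cat{D}[\mathcal{D}^{-1}]$, unfolding $[t/z]^\dag \comp [t/z] = [\id_H / 1]$ yields $t^\dag \comp t = z^\dag \smult z \smult \id_H$ in $\cat{D}$. Lemma~\ref{lem:daggermono} then supplies a dagger monomorphism $s \colon H \to K$ in $\cat{D}$ with $t = z \smult s$, whence $[t/z] = [z \smult s / z] \sim [s/1]$.

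The main obstacle is exactly this fullness step: without Lemma~\ref{lem:daggermono}, itself relying on axiom~\eqref{axiom:positive}, there would be no structural reason for a dagger monomorphism in the localisation to descend to one in $\cat{D}$. Everything else is essentially bookkeeping with the equivalence relation, and the care needed is to track that $z^\dag$ remains a nonzero scalar and that scalar multiplication interacts correctly with $\dagger$ and with $\comp$.
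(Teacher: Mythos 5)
Your proof is correct and follows essentially the same route as the paper: define $[t/z]^\dag=[t^\dag/z^\dag]$, observe the embedding preserves daggers and is faithful, and obtain fullness on dagger monomorphisms by unfolding $[t/z]^\dag\comp[t/z]=[\id/1]$ to $t^\dag\comp t=z^\dag\smult z\smult\id$ and invoking Lemma~\ref{lem:daggermono}. The paper simply asserts well-definedness of the dagger where you spell out the bookkeeping (nonzeroness of $z^\dag$, compatibility of $\dag$ with $\smult$ and $\comp$), so your write-up is if anything more complete.
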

\begin{proof}
  Set $[t/z]^\dag = [t^\dag / z^\dag]$, which is clearly well-defined.
  By Proposition~\ref{prop:completion}, the embedding $\cat{D} \to \cat{D}[\mathcal{D}^{-1}]$ sends an object $A$ to itself, and sends a morphism $t$ to $[t/1]$. 
  It is faithful by Lemma~\ref{lem:scalarmultiplicationmonic}. 
  The embedding clearly preserves daggers, hence dagger monomorphisms, and so restricts to the wide subcategories of dagger monomorphisms. 
  This restricted functor is still bijective on objects and faithful. 
  To see that it is full, let $[t/z]$ be a dagger monomorphism in $\cat{D}[\mathcal{D}^{-1}]$. The assumption $[\id/1] = [t/z]^\dag \comp [t/z] = [t^\dag \comp t / z^\dag \smult z]$ means $t^\dag \comp t = z^\dag \smult z \smult \id$, so Lemma~\ref{lem:daggermono} implies $t=z \bullet s$ for a dagger monic $s$ in $\cat{D}$. 
  But then $[t/z]=[s/1]$ is the image of a dagger monic in $\cat{D}$ under the embedding. 
\end{proof}

An object is \emph{simple} when any monomorphism into it must be 0 or invertible. This requirement on $I$ is stronger than axiom~\eqref{axiom:simple}, which only concerns dagger monomorphisms.

\begin{lemma}\label{lem:simple}
  The tensor unit $I$ in $\cat{D}[\mathcal{D}^{-1}]$ is simple.
\end{lemma}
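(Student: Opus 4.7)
The plan is to represent a hypothetical monomorphism $m \colon S \to I$ in $\cat{D}[\mathcal{D}^{-1}]$ as $m = [t/z]$ with $t \in \cat{D}(S,I)$ and $z$ a nonzero scalar, then to case split on the scalar $w = t \comp t^\dag \in \cat{D}(I,I)$. If $w \neq 0$, then $z \smult w$ is a nonzero scalar by Lemma~\ref{lem:scalarsmonic}, so $r = [z \smult t^\dag / w]$ is a well-defined morphism $I \to S$ in $\cat{D}[\mathcal{D}^{-1}]$. The composition rule of Proposition~\ref{prop:completion} together with the bifunctoriality of $\otimes$ gives
\[
  m \comp r = [t \comp (z \smult t^\dag) / z \smult w] = [z \smult w / z \smult w] = \id_I,
\]
so $m$ is split epic; combined with the monic hypothesis, this forces $m$ to be an isomorphism.

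The case $w = 0$ is where I expect the main obstacle, as it requires a positivity lemma that is not yet explicit in the excerpt. Concretely, my plan is to prove that for every morphism $s \colon A \to B$ in $\cat{D}$, the condition $s^\dag \comp s = 0$ forces $s = 0$. By axiom~\eqref{axiom:equalisers} there is a dagger equaliser $q \colon N \to B$ of $s^\dag$ and $0$, so $q^\dag \comp q = \id_N$ and $s^\dag \comp q = 0$. Since $s^\dag \comp s = 0 = 0 \comp s$, the universal property of the equaliser supplies a unique $u \colon A \to N$ with $s = q \comp u$, and then
\[
  u = q^\dag \comp q \comp u = q^\dag \comp s = (s^\dag \comp q)^\dag = 0,
\]
so $s = q \comp u = 0$. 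Applying this to $s = t^\dag$ in the case $w = 0$ yields $t^\dag = 0$, hence $t = 0$ and $m = [0/z] = 0$.

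Combining the two cases, every monomorphism $S \to I$ in $\cat{D}[\mathcal{D}^{-1}]$ is either zero or invertible, establishing that $I$ is simple. It is worth noting that this argument uses axiom~\eqref{axiom:simple} only indirectly via the positivity lemma and the nonvanishing of $w$; the strengthening from dagger-simplicity to simplicity is recovered precisely because the localisation inverts every nonzero scalar, transforming the candidate $t^\dag/w$ into a genuine right inverse of $m$.
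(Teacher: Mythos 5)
Your proof is correct and follows essentially the same route as the paper's: both exhibit the explicit section $[z \smult t^\dag \,/\, t \comp t^\dag]$ of the monomorphism when the scalar $t \comp t^\dag$ is nonzero, and conclude that the monomorphism is zero otherwise. The only (harmless) divergence is in the degenerate case, where the paper deduces $t \comp t^\dag = 0 \Rightarrow t = 0$ from the monicity of $t$ in $\cat{D}$, whereas you prove the general positivity fact that $s^\dag \comp s = 0$ forces $s = 0$ via a dagger equaliser of $s^\dag$ and $0$ --- a correct, self-contained argument using only axiom~\eqref{axiom:equalisers}.
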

\begin{proof}
  First, we will show that a monomorphism $[s/z] \colon S \to I$ represents the same subobject of I as $[0/1]$ if and only if $s = 0$. Suppose $[s/z] \colon S \to I$ is a monomorphism in $\cat{D}[\mathcal{D}^{-1}]$. It is always true that $[0/1] \leq [s/z]$ as subobjects, so $[s/z]$ is the minimum subobject if and only if $[s/z] \leq [0/1]$. This happens when there is a morphism $[0,w] \colon S \to 0$ such that $[s/z]\sim[0/1] \comp [0/w]=[0/w]$, that is, $0_{S,I}=z \smult 0 = w \smult s$. Because $w \bullet 0_{S,I} = 0_{S,I}$, by Lemma~\ref{lem:scalarmultiplicationmonic}, this means exactly that $s=0$. Thus $[s/z]=0$ is the minimum subobject if and only if $s=0$.

  Next, we will show that a monomorphism $[s/z] \colon S \to I$ represents the same subobject of $I$ as $[1/1]$ if and only if $s\neq 0$. Similarly to the last paragraph, $[s/z]\leq[\id/1]$ as subobjects, so $[s/z]$ is the maximum subobject if and only if $[\id/1]\leq [s/z]$. This happens when there is a morphism $[t/w] \colon I \to S$ such that $[\id/\id]\sim [s/z] \comp [t/w]=[s \comp t/z \smult w]$, that is, $s \comp t = z \smult w$. We may choose $w=s \comp s^\dag$ and $t=z \smult s^\dag$ to see that this is true as soon as $s \comp s^\dag \neq 0$. This final inequality is equivalent to $s^\dag \neq 0$ because $s$ is monic in this case. 

  Now, either $s=0$ or $s\neq 0$. If $s=0$ then $[s/z]$ is the minimum subobject, and if $s \neq 0$ then $[s/z]$ is the maximum subobject. Therefore $I$ is simple in $\cat{D}[\mathcal{D}^{-1}]$.
\end{proof}

\begin{lemma}\label{lem:separator}
  The tensor unit $I$ is a monoidal separator in $\cat{D}[\mathcal{D}^{-1}]$.
\end{lemma}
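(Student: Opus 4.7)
The plan is to reduce the statement to axiom~\eqref{axiom:separator} in $\cat{D}$ by bringing the two morphisms to a common denominator and then using the strictness of the monoidal embedding from Lemma~\ref{lem:monoidal} to transport the hypothesis back into $\cat{D}$.

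First I would pick parallel morphisms $\alpha,\beta \colon H \otimes K \to L$ in $\cat{D}[\mathcal{D}^{-1}]$ that agree after precomposition with every $x \otimes y$ where $x \colon I \to H$ and $y \colon I \to K$ live in $\cat{D}[\mathcal{D}^{-1}]$. Writing $\alpha=[s/z]$ and $\beta=[t/w]$, I would replace them by the equivalent representatives $[w \smult s/u]$ and $[z \smult t/u]$ with common denominator $u = z \smult w$, which is nonzero because products of nonzero scalars are nonzero (as recorded in the proof of Proposition~\ref{prop:completion}). Thus it suffices to show that two morphisms $[s'/u]$ and $[t'/u]$ with a common nonzero denominator that agree on all such $x \otimes y$ are equal, which by the definition of the equivalence relation and Lemma~\ref{lem:scalarmultiplicationmonic} is the same as showing $s'=t'$ in $\cat{D}$.

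Next I would specialise the hypothesis: for any $x \colon I \to H$ and $y \colon I \to K$ already in $\cat{D}$, the strict monoidality from Lemma~\ref{lem:monoidal} gives $[x/1] \otimes [y/1] = [x \otimes y /1]$, and then composition in $\cat{D}[\mathcal{D}^{-1}]$ yields
\[
[s'/u] \comp [x \otimes y / 1] = [s' \comp (x \otimes y)/u]
\qquad\text{and similarly for } t'.
\]
Equality of these two classes, combined with Lemma~\ref{lem:scalarmultiplicationmonic} applied to cancel the nonzero scalar $u$, gives $s' \comp (x \otimes y) = t' \comp (x \otimes y)$ in $\cat{D}$ for every $x,y$ drawn from $\cat{D}$. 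Axiom~\eqref{axiom:separator} applied in $\cat{D}$ then concludes $s'=t'$, hence $\alpha = \beta$.

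I do not expect a serious obstacle here; the only mildly delicate point is checking that restricting the quantification from all $x,y$ in $\cat{D}[\mathcal{D}^{-1}](I,-)$ to those of the form $[x/1]$ and $[y/1]$ still supplies enough test morphisms to invoke~\eqref{axiom:separator} in $\cat{D}$, and that the common-denominator manoeuvre is legitimate. Both follow immediately from the explicit description in Proposition~\ref{prop:completion} and the strict monoidality of the embedding.
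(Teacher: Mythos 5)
Your proof is correct and takes essentially the same route as the paper: test against pure tensors coming from $\cat{D}$, unfold composition in the localisation, cancel the nonzero scalar via Lemma~\ref{lem:scalarmultiplicationmonic}, and invoke axiom~\eqref{axiom:separator} in $\cat{D}$. The paper merely skips the explicit common-denominator step by unfolding the equivalence relation directly and then specialising the denominator of the test morphism to $1$; the content is identical.
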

\begin{proof}
  Let $[t/z],[t'/z'] \colon H \otimes K \to L$, and suppose $[t/z] \comp [x \otimes y/w] \sim [t'/z'] \comp [x \otimes y/w]$ for all $0 \neq w \colon I \to I$ and $x \colon I \to H$ and $y \colon I \to K$ in $\cat{D}$. Then $(w \smult z \smult t') \comp (x \otimes y)$ is equal to $(w \smult z' \smult t) \comp (x \otimes y)$. By~\eqref{axiom:separator}, $w \smult z \smult t' = w \smult z' \smult t$. Taking $w=1$ gives $z \smult t'=z' \smult t$, that is, $[t/z]\sim[t'/z']$.
\end{proof}

A \emph{dagger biproduct} of $H$ and $K$ in a dagger category with a zero object $0$ is a product $H \stackrel{\pi_1}{\leftarrow} H \oplus K \stackrel{\pi_2}{\rightarrow} K$ such that $\pi_1^\dag$ and $\pi_2^\dag$ are dagger monomorphisms and $\pi_2 \comp \pi_1^\dag = 0$.
 
\begin{lemma}\label{lem:biproducts}
  The category $\cat{D}[\mathcal{D}^{-1}]$ has dagger biproducts, given by $\oplus$ and $0$, and the functor $\cat{D} \to \cat{D}[\mathcal{D}^{-1}]$ is strict monoidal for $\oplus$.
\end{lemma}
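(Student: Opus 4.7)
My plan is to first extend $\oplus$ to a symmetric monoidal bifunctor on $\cat{D}[\mathcal{D}^{-1}]$, then check that $(H \oplus K, \pi_1, \pi_2)$ with $\pi_i := [\inl_i^\dag/1]$ is a dagger biproduct, using axiom~\eqref{axiom:nondegenerate} to produce the product pairings.

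For the bifunctor, I would define on morphisms
\[
  [s/w] \oplus [t/z] \;:=\; \bigl[(z \smult s) \oplus (w \smult t)\;/\; w \smult z\bigr];
\]
the asymmetric coefficients are forced, because this should represent $\oplus$ applied to what $[s/w]$ and $[t/z]$ morally invert to. Well-definedness under $\sim$ follows from distributivity of scalar multiplication over $\oplus$ (a consequence of the rig distributor of axiom~\eqref{axiom:rig}) combined with the defining equations $w' \smult s = w \smult s'$ and $z' \smult t = z \smult t'$. I would take the associators, unitors, and braidings as the images of those in $\cat{D}$, which are dagger isomorphisms by axiom~\eqref{axiom:rig}. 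The embedding $t \mapsto [t/1]$ is then strict monoidal by inspection at $w=z=1$.

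The zero-object property of $0$ and the biproduct equations $\inl^\dag \comp \inl = \id$, $\inr^\dag \comp \inr = \id$, $\inr^\dag \comp \inl = 0$ are inherited from $\cat{D}$, where they follow from the dagger rig structure together with initiality of $0$. Lemma~\ref{lem:dagger} ensures that $[\inl/1]$ and $[\inr/1]$ remain dagger monic in $\cat{D}[\mathcal{D}^{-1}]$, so it only remains to verify the product universal property.

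Uniqueness of pairings is easy: dualising axiom~\eqref{axiom:jointlyepic} through the dagger gives joint monicity of $\pi_1, \pi_2$ in $\cat{D}$, which transfers to $\cat{D}[\mathcal{D}^{-1}]$ straight from the definition of $\sim$. Existence is the main obstacle. Given $[f/w] \colon L \to H$ and $[g/z] \colon L \to K$, I would invoke axiom~\eqref{axiom:nondegenerate} to fix $\sigma \colon I \to I \oplus I$ with nonzero projections $\alpha := \inl^\dag \comp \sigma$ and $\beta := \inr^\dag \comp \sigma$, and form a ``scaled diagonal'' $\tilde\sigma \colon L \to L \oplus L$ by tensoring with $\id_L$ and postcomposing with the rig distributor, so that $\inl^\dag \comp \tilde\sigma = \alpha \smult \id_L$ and $\inr^\dag \comp \tilde\sigma = \beta \smult \id_L$. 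Then the morphism
\[
  h \;:=\; \bigl((z \smult \beta \smult f) \oplus (w \smult \alpha \smult g)\bigr) \comp \tilde\sigma
\]
in $\cat{D}$ has $\inl^\dag \comp h = z \smult \alpha \smult \beta \smult f$ and $\inr^\dag \comp h = w \smult \alpha \smult \beta \smult g$, so $m := [h / w \smult z \smult \alpha \smult \beta]$ satisfies $\pi_1 \comp m = [f/w]$ and $\pi_2 \comp m = [g/z]$ by direct calculation. The scalar $w \smult z \smult \alpha \smult \beta$ is only invertible in $\cat{D}[\mathcal{D}^{-1}]$, which is precisely why localisation is what promotes $\oplus$ from a mere monoidal product in $\cat{D}$ to a biproduct.
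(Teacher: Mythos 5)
Your proposal is correct and follows essentially the same route as the paper: the same formula $[s/w]\oplus[t/z]=[(z\smult s)\oplus(w\smult t)/w\smult z]$, the same use of axiom~(\ref{axiom:nondegenerate}) to build the pairing via a scaled diagonal obtained from the rig distributor, and the same uniqueness argument from joint epicity of $\inl$ and $\inr$. The only cosmetic difference is that the paper deduces cartesianness from the terminality of $0$ plus the existence of binary products, while you verify the dagger-biproduct equations directly; both are fine.
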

\begin{proof}
  For all morphisms $[t/z]\colon H \to K$ and $[t'/z']\colon H' \to K'$, define a morphism $H \oplus H' \to K \oplus K'$ by $[t/z] \oplus [t'/z'] = [z' \bullet t \oplus z \bullet t' / z \bullet z']$. This produces a functor $\oplus \colon \cat{D}[\mathcal{D}^{-1}] \times \cat{D}[\mathcal{D}^{-1}] \to \cat{D}[\mathcal{D}^{-1}]$. The embedding $\cat{D} \to \cat{D}[\mathcal{D}^{-1}]$ induces coherence isomorphisms, making $(\cat{D}[\mathcal{D}^{-1}], \oplus, 0)$ a  symmetric monoidal category. To show that it is cartesian monoidal, it suffices to prove that $K \oplus L$ is the product of $K$ and $L$ with projections $[\inl^\dag/1]$ and $[\inr^\dag/1]$, because $0$ is clearly terminal~\cite{eilenbergkelly:closed}. It then follows that $K \oplus L$ is a dagger biproduct of $K$ and $L$ because $\inl$ and $\inr$ are dagger monomorphisms.

  Axiom~\eqref{axiom:nondegenerate} provides a map $s \colon I \to I \oplus I$ in $\cat{D}$ such that the scalars $x=\inl^\dag \comp s$ and $y=\inr^\dag \comp s$ are nonzero. We will use this to show that $\oplus$ forms a product in $\cat{D}[\mathcal{D}^{-1}]$ with projections $[\inl^\dag/1]$ and $[\inr^\dag/1]$. 
  Let $[r/w] \colon H \to K$ and $[t/z] \colon H \to L$ in $\cat{D}[\mathcal{D}^{-1}]$. 
  Define $q \colon H \to K \oplus L$ to be the following composite:
   \[\begin{tikzcd}[row sep=3mm, column sep=8mm]
    H && {H \oplus H} &&&& {K \oplus L} \\
    {I \otimes H} & {(I \oplus I) \otimes H} & {(I \otimes H) \oplus (I \otimes H)}
    \arrow["\simeq"{marking}, draw=none, from=1-1, to=2-1]
    \arrow["\simeq"{marking}, draw=none, from=2-2, to=2-3]
    \arrow["{[s \otimes \id/1]}"', from=2-1, to=2-2]
    \arrow["\simeq"{marking}, draw=none, from=2-3, to=1-3]
    \arrow["{\big[ z \bullet y \bullet r \oplus w \bullet x \bullet t \big/ w \bullet z \bullet x \bullet y\big]}", from=1-3, to=1-7]
    \arrow["{[p/1]}", dashed, from=1-1, to=1-3]
   \end{tikzcd}\]
  Here $p$ is made up from $s$ and isomorphisms provided by axiom~\eqref{axiom:rig}.
  We will prove that the following diagram commutes in $\cat{D}[\mathcal{D}^{-1}]$:
  \[\begin{tikzpicture}[xscale=4,yscale=2]
    \node (h) at (0,0) {$K$};
    \node (hk) at (1,0) {$K \oplus L$};
    \node (k) at (2,0) {$L$};
    \node (l) at (1,1) {$H$};
    \draw[<-] (h) to node[below]{$[\inl^\dag/1]$} (hk);
    \draw[<-] (k) to node[below]{$[\inr^\dag/1]$} (hk);
    \draw[<-] (h) to node[fill=white]{$[r/w]$} (l);
    \draw[<-] (k) to node[fill=white]{$[t/z]$} (l);
    \draw[<-,dashed] (hk) to node[fill=white]{$q$} (l);
  \end{tikzpicture}\]
  The left triangle commutes because the following diagram commutes in $\cat{D}$:
  \[\begin{tikzcd}[row sep=4mm, column sep=5mm]
    H & {I \otimes H} & {(I \oplus I) \otimes H} & {(I \otimes H) \oplus (I \otimes H)} & {H \oplus H} && {K \oplus L} \\
    & {I \otimes H} & {I \otimes H} & {I \otimes H} \\
    H &&&& H && K
    \arrow["\simeq"{description}, draw=none, from=1-1, to=1-2]
    \arrow["{s \otimes \id}", from=1-2, to=1-3]
    \arrow["\simeq"{description}, draw=none, from=1-3, to=1-4]
    \arrow["\simeq"{description}, draw=none, from=1-4, to=1-5]
    \arrow["{z \smult y \smult r \oplus w \smult x \smult t}", from=1-5, to=1-7, yshift=1mm]
    \arrow["{\inl^\dag}", from=1-7, to=3-7]
    \arrow["{\inl^\dag}", from=1-5, to=3-5]
    \arrow["{z \smult y \smult r}"', from=3-5, to=3-7]
    \arrow[Rightarrow, no head, from=1-1, to=3-1]
    \arrow["{\inl^\dag \otimes \id}", from=1-3, to=2-3]
    \arrow["{\inl^\dag}", from=1-4, to=2-4]
    \arrow["\simeq"{description}, from=2-4, to=3-5]
    \arrow[Rightarrow, no head, from=2-3, to=2-4]
    \arrow["\simeq"{description}, from=3-1, to=2-2]
    \arrow[Rightarrow, no head, from=1-2, to=2-2]
    \arrow["{x \otimes \id}"', from=2-2, to=2-3]
    \arrow["{x \smult \id}"', from=3-1, to=3-5]
  \end{tikzcd}\]
  The right triangle commutes similarly.

  Moreover, the mediating map $q$ is unique.
  For suppose $[t/z],[t'/z'] \colon H \to K \oplus L$ both satisfy $[\inl^\dag/1] \circ [t/z] = [\inl^\dag/1] \circ [t'/z']$ and $[\inr^\dag/1] \circ [t/z] = [\inr^\dag/1] \circ [t'/z']$. Then $\inl^\dag \comp (z \smult t') = \inl^\dag \comp (z' \smult t)$ and $\inr^\dag \comp (z \smult t') = \inr^\dag \comp (z' \smult t)$. Because $\inl^\dag$ and $\inr^\dag$ are jointly epic by axiom~\eqref{axiom:jointlyepic}, we have that $z \smult t'=z' \smult t$. Thus, $[t/z]=[t'/z']$.
\end{proof}

\begin{lemma}\label{lem:equalisers}
  The category $\cat{D}[\mathcal{D}^{-1}]$ has dagger equalisers, and the functor $\cat{D} \to \cat{D}[\mathcal{D}^{-1}]$ preserves them.
\end{lemma}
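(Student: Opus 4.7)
The plan is to construct dagger equalisers in $\cat{D}[\mathcal{D}^{-1}]$ by clearing denominators and taking a dagger equaliser of the cleared pair in $\cat{D}$. Given $[s/w], [t/z] \colon H \to K$ in $\cat{D}[\mathcal{D}^{-1}]$, I would apply axiom~\eqref{axiom:equalisers} to obtain a dagger equaliser $e \colon E \to H$ in $\cat{D}$ of the parallel pair $z \smult s, w \smult t \colon H \to K$, and then argue that $[e/1]$ is a dagger equaliser of $[s/w]$ and $[t/z]$ in $\cat{D}[\mathcal{D}^{-1}]$. The equalising condition follows by direct calculation: $[s/w] \comp [e/1] = [s \comp e / w]$ and $[t/z] \comp [e/1] = [t \comp e / z]$ represent the same morphism precisely because $(z \smult s) \comp e = (w \smult t) \comp e$ by definition of $e$.

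For the universal property, suppose $[r/u] \colon G \to H$ satisfies $[s/w] \comp [r/u] = [t/z] \comp [r/u]$. Unfolding the equivalence relation and using that the nonzero scalar $u$ is cancellable by Lemma~\ref{lem:scalarmultiplicationmonic}, I would obtain $(z \smult s) \comp r = (w \smult t) \comp r$, so $r$ factors uniquely as $r = e \comp r'$ in $\cat{D}$ by the universal property of $e$. Then $[r'/u]$ is the desired mediator, since $[e/1] \comp [r'/u] = [e \comp r'/u] = [r/u]$. Uniqueness in $\cat{D}[\mathcal{D}^{-1}]$ reduces to $e$ being monic in $\cat{D}$: any other factorisation $[q/v]$ of $[r/u]$ through $[e/1]$ satisfies $e \comp (u \smult q) = e \comp (v \smult r')$, and cancelling $e$ yields $[q/v] \sim [r'/u]$.

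Finally, $[e/1]$ is a dagger monomorphism because $[e/1]^\dag \comp [e/1] = [e^\dag \comp e / 1] = [\id/1]$, and the embedding $\cat{D} \to \cat{D}[\mathcal{D}^{-1}]$ preserves dagger equalisers because specialising the construction to denominators $w = z = 1$ returns the dagger equaliser $e$ itself. I do not anticipate a serious obstacle; the one load-bearing move is the cancellation of $u$ via Lemma~\ref{lem:scalarmultiplicationmonic}, which is valid because a morphism of $\cat{D}[\mathcal{D}^{-1}]$ is by definition presented with a nonzero denominator.
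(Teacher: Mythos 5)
Your proposal is correct and follows essentially the same route as the paper: take a dagger equaliser in $\cat{D}$ of the pair with denominators cleared, and check it serves as a dagger equaliser in $\cat{D}[\mathcal{D}^{-1}]$. The only (harmless) difference is in the universal property: you cancel the denominator $u$ first via Lemma~\ref{lem:scalarmultiplicationmonic} and factor $r$ directly, obtaining the mediator $[r'/u]$, whereas the paper factors the scaled morphism $w \smult e'$ and carries a squared denominator $[m/w \smult w]$; both verifications go through.
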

\begin{proof}
  Let $[t/z],[t'/z'] \colon H \to K$. Let $e \colon E \to H$ be a dagger equaliser of $z' \smult t$ and $z \smult t'$ in $\cat{D}$. Then $[t/z] \comp [e/1] \sim [t'/z'] \comp [e/1]$. 
  Suppose that also $[t/z] \comp [e'/w] \sim [t'/z'] \comp [e'/w]$ for a nonzero scalar $w$ and $e' \colon E' \to H$. Then $z' \smult t \comp w \smult e' = z \smult t' \comp w \smult e'$, and hence there is a unique morphism $m \colon E' \to E$ in $\cat{D}$ with $w \smult e' = e \comp m$.
  Thus, $w \smult w \smult e' = w \smult e \comp m$, that is, $[e'/w] \sim [e/1] \comp [m/ w \smult w]$.
  \[\begin{tikzpicture}[xscale=3.5,yscale=1.5]
    \node (e) at (0,1) {$E$};
    \node (h) at (1,1) {$H$};
    \node (k) at (2,1) {$K$};
    \node (e') at (0,0) {$E'$};
    \draw[->] (e) to node[above]{$[e/1]$} (h);
    \draw[->] ([yshift=.5mm]h.east) to node[above]{$[t/z]$} ([yshift=.5mm]k.west);
    \draw[->] ([yshift=-.5mm]h.east) to node[below]{$[t'/z']$} ([yshift=-.5mm]k.west);
    \draw[->] (e') to node[fill=white]{$[e'/w]$} (h);
    \draw[->,dashed] (e') to node[left]{$[m/w \smult w]$} (e);
  \end{tikzpicture}
  \]
  \[\phantom{[/w \smult w]}
  \begin{tikzpicture}[xscale=3.5,yscale=1.5]
    \node (e) at (0,1) {$E$};
    \node (h) at (1,1) {$H$};
    \node (k) at (2,1) {$K$};
    \node (e') at (0,0) {$E'$};
    \draw[->] (e) to node[above]{$e$} (h);
    \draw[->] ([yshift=.5mm]h.east) to node[above]{$z' \smult t$} ([yshift=.5mm]k.west);
    \draw[->] ([yshift=-.5mm]h.east) to node[below]{$z \smult t'$} ([yshift=-.5mm]k.west);
    \draw[->] (e') to node[fill=white]{$w \smult e'$} (h);
    \draw[->,dashed] (e') to node[left]{$m$} (e);
  \end{tikzpicture}
  \]
  To see that $[m/w \smult w]$ is the unique such morphism, suppose that we also have $[e'/w] \sim [e/1] \comp [n/v]$, that is, $v \smult e' = w \smult e \comp n$.
  We have that $e \comp (v \smult w \smult m) = v \smult w \smult w \smult e' = e \comp (w \smult w \smult w \smult n)$. The morphism $e$ is dagger monic because it is a dagger equalizer, so $v \smult w \smult m = w \smult w \smult w \smult n$. By Lemma~\ref{lem:scalarmultiplicationmonic}, $v \smult m = w \smult w \smult n$, and so $[m/ w \smult w] \sim [n/v]$.
\end{proof}

\begin{lemma}\label{lem:kernels}
  Any dagger monomorphism in $\cat{D}[\mathcal{D}^{-1}]$ is a kernel.  
\end{lemma}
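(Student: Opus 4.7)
The plan is to lift the statement to $\cat{D}$ using Lemma~\ref{lem:dagger} and then transport a kernel diagram witnessed in $\cat{D}$ (by axiom~\eqref{axiom:kernels}) through the embedding $\cat{D} \to \cat{D}[\mathcal{D}^{-1}]$.

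First I would take an arbitrary dagger monomorphism in $\cat{D}[\mathcal{D}^{-1}]$. By Lemma~\ref{lem:dagger}, the wide subcategory of dagger monomorphisms in $\cat{D}[\mathcal{D}^{-1}]$ is isomorphic to the wide subcategory of dagger monomorphisms in $\cat{D}$ via the embedding, so without loss of generality the chosen morphism is $[m/1] \colon N \to H$ for some dagger monomorphism $m \colon N \to H$ in $\cat{D}$. By axiom~\eqref{axiom:kernels}, $m$ is a dagger equaliser of some $f \colon H \to K$ and $0 \colon H \to K$ in $\cat{D}$. I then claim that $[m/1]$ is an equaliser of $[f/1]$ and $[0/1]$ in $\cat{D}[\mathcal{D}^{-1}]$, which exhibits it as a kernel.

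The equation $[f/1] \comp [m/1] = [f \comp m/1] = [0/1] = [0/1] \comp [m/1]$ is immediate. For the universal property, suppose $[t/z] \colon L \to H$ satisfies $[f/1] \comp [t/z] = [0/1] \comp [t/z]$, i.e., $[f \comp t/z] \sim [0/z]$. Unpacking the equivalence gives $z \smult (f \comp t) = z \smult 0$, so by Lemma~\ref{lem:scalarmultiplicationmonic} (using that $z$ is nonzero) one obtains $f \comp t = 0$ in $\cat{D}$. Since $m$ equalises $f$ and $0$ in $\cat{D}$, there is a unique $u \colon L \to N$ with $m \comp u = t$, and then $[m/1] \comp [u/z] = [m \comp u/z] = [t/z]$, as required.

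For uniqueness of the mediating morphism, if $[v/w] \colon L \to N$ also satisfies $[m/1] \comp [v/w] = [t/z]$, then $z \smult (m \comp v) = w \smult t = w \smult (m \comp u)$, whence $m \comp (z \smult v) = m \comp (w \smult u)$. Because $m$ is a dagger monomorphism and hence monic, $z \smult v = w \smult u$, that is $[v/w] \sim [u/z]$. The main subtlety in this argument is exactly the point where one passes from an equation in $\cat{D}[\mathcal{D}^{-1}]$ to the corresponding equation in $\cat{D}$; this relies on the cancellation property of nonzero scalars provided by Lemma~\ref{lem:scalarmultiplicationmonic}, which is what makes the naive kernel candidate $[m/1]$ actually work.
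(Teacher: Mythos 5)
Your proposal is correct and follows essentially the same route as the paper: reduce to a representative $[m/1]$ via Lemma~\ref{lem:dagger}, invoke axiom~\eqref{axiom:kernels} to write $m$ as a kernel in $\cat{D}$, and transfer the universal property using the scalar-cancellation of Lemma~\ref{lem:scalarmultiplicationmonic} for existence and the monicity of $m$ for uniqueness. The only cosmetic difference is bookkeeping in the uniqueness step (the paper compares both candidates to $[m/z']$ directly), which does not change the argument.
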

\begin{proof}
  Consider a dagger monomorphism in $\cat{D}[\mathcal{D}^{-1}]$. By Lemma~\ref{lem:dagger}, we may assume that it is of the form $[t/1]$, where $t \colon H \to K$ is a dagger monomorphism in $\cat{D}$.
  Now~\eqref{axiom:kernels} gives $t=\ker(s)$ for some $s \colon K \to L$ in $\cat{D}$. We claim that $[t/1]=\ker([s/1])$ in $\cat{D}[\mathcal{D}^{-1}]$.

  First, clearly $[s/1] \circ [t/1]=0$ because $s \circ t=0$.
  Next suppose that $[s/1] \circ [t'/z'] \sim [0/1] \circ [t'/z']$ for some $[t'/z'] \colon H' \to K$. Then $z' \smult s \comp t'=z' \smult 0$, so $s \comp t'=0$ by Lemma~\ref{lem:scalarmultiplicationmonic}.
  Thus $t'$ factors through $t=\ker(s)$ via some $m \colon H' \to H$.
  Then $[t/1] \comp [m/z'] \sim [t'/z']$ because $z' \smult t \comp m = z' \smult t'$.
  \[\begin{tikzpicture}[xscale=3.5,yscale=1.5]
      \node (h) at (0,1) {$H$};
      \node (k) at (1,1) {$K$};
      \node (l) at (2,1) {$L$};
      \node (h') at (0,0) {$H'$};
      \draw[->] (h) to node[above]{$[t/1]$} (k);
      \draw[->] ([yshift=.5mm]k.east) to node[above]{$[s/1]$} ([yshift=.5mm]l.west);
      \draw[->] ([yshift=-.5mm]k.east) to node[below]{$[0/1]$} ([yshift=-.5mm]l.west);
      \draw[->] (h') to node[fill=white]{$[t'/z']$} (k);
      \draw[->,dashed] (h') to node[left]{$[m/z']$} (h);
    \end{tikzpicture}
  \]
  \[\hspace*{6mm}
    \begin{tikzpicture}[xscale=3.5,yscale=1.5]
      \node (h2) at (1,1) {$H$};
      \node (k) at (2,1) {$K$};
      \node (l) at (3,1) {$L$};
      \node (h') at (1,0) {$H'$};
      \draw[->] (h2) to node[above]{$t$} (k);
      \draw[->] ([yshift=.5mm]k.east) to node[above]{$s$} ([yshift=.5mm]l.west);
      \draw[->] ([yshift=-.5mm]k.east) to node[below]{$0$} ([yshift=-.5mm]l.west);
      \draw[->] (h') to node[fill=white]{$t'$} (k);
      \draw[->,dashed] (h') to node[left]{$m$} (h2);
    \end{tikzpicture}
  \]
  Finally, to see that $[m/z']$ is unique, suppose that $[t/1] \comp [r/w]\sim[t'/z']$. Then $w \smult t \comp m = w \smult t' = z' \smult t \comp r$. It follows that $z' \smult r = w \smult m$ because $t$ is dagger monic, and hence $[m/z']\sim[r/w]$.
\end{proof}

\begin{lemma}\label{lem:colimits}
  The wide subcategory of dagger monomorphisms in $\cat{D}[\mathcal{D}^{-1}]$ has directed colimits.  
\end{lemma}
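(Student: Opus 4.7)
By Lemma~\ref{lem:dagger}, the wide subcategory of dagger monomorphisms of $\cat{D}[\mathcal{D}^{-1}]$ is isomorphic to that of $\cat{D}$, so it suffices to prove that the latter has directed colimits. Given a directed diagram $D \colon J \to \cat{D}$ consisting of dagger monomorphisms, I would form its colimit $(H_\infty, d_i)$ in $\cat{D}$ via axiom~\eqref{axiom:colimits} and show that this data is still a colimit after restricting to the wide subcategory of dagger monomorphisms.

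Step~1 is that each leg $d_i$ is already a dagger monomorphism. Fix $i_0 \in J$. The family $D(i_0 \leq j)^\dag \colon D(j) \to D(i_0)$, for $j \geq i_0$, assembles into a cocone on the cofinal restricted diagram $D|_{J_{\geq i_0}}$, because $D(i_0 \leq j)^\dag \comp D(i \leq j) = D(i_0 \leq i)^\dag \comp D(i \leq j)^\dag \comp D(i \leq j) = D(i_0 \leq i)^\dag$ once we use that $D(i \leq j)$ is dagger monic. Cofinality keeps $(H_\infty, d_j)_{j \geq i_0}$ a colimit, so the universal property yields $r \colon H_\infty \to D(i_0)$ with $r \comp d_j = D(i_0 \leq j)^\dag$ for all $j \geq i_0$; at $j = i_0$ this gives $r \comp d_{i_0} = \id$. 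Thus $d_{i_0}$ is split monic, and hence dagger monic by Lemma~\ref{lem:daggeriso}.

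Step~2 is the universal property in the wide subcategory. Let $(K, t_i)$ be another cocone of dagger monomorphisms and $t_\infty \colon H_\infty \to K$ the mediating map supplied by the $\cat{D}$-colimit. I would first derive the identity $t_j^\dag \comp t_i = d_j^\dag \comp d_i$ for all $i,j \in J$: picking any $k \geq i,j$, both sides collapse to $D(j \leq k)^\dag \comp D(i \leq k)$ because $t_k$ and $d_k$ are dagger monic. Second, I would observe that joint epicness of $(d_j)$, which holds for every colimit cocone, transfers through the dagger to joint monicness of $(d_j^\dag)$: if $d_j^\dag \comp f = d_j^\dag \comp g$ for all $j$, then $f^\dag \comp d_j = g^\dag \comp d_j$ for all $j$, so $f^\dag = g^\dag$ and hence $f = g$. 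Combining these, $d_j^\dag \comp t_\infty^\dag \comp t_i = t_j^\dag \comp t_i = d_j^\dag \comp d_i$ for every $j$, so $t_\infty^\dag \comp t_i = d_i$. Therefore $t_\infty^\dag \comp t_\infty \comp d_i = d_i$ for all $i$, and joint epicness of $(d_i)$ yields $t_\infty^\dag \comp t_\infty = \id_{H_\infty}$, making $t_\infty$ dagger monic.

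Uniqueness of the dagger monic mediating map is automatic, since $t_\infty$ is already unique as a $\cat{D}$-morphism. The most delicate step is Step~1: the splitting cocone $D(i_0 \leq j)^\dag$ is the key construction for recognising that the $\cat{D}$-colimit survives the restriction to dagger monomorphisms, and everything else is bookkeeping with Lemmas~\ref{lem:dagger} and~\ref{lem:daggeriso} together with the general fact that colimit cocones are jointly epic.
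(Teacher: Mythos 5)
Your proof is correct and takes essentially the same approach as the paper: the key step in both is to assemble the adjoints $D(i_0\leq j)^\dag$ into a cocone on the restricted (cofinal) diagram, extract a retraction of $d_{i_0}$ from the universal property, and conclude via Lemma~\ref{lem:daggeriso} that every leg of the $\cat{D}$-colimit is dagger monic. Your Step~2, checking that the mediating map $t_\infty$ into a cocone of dagger monomorphisms is itself dagger monic, is a point the paper leaves implicit (it asserts, citing Lemma~\ref{lem:dagger}, that dagger monicity of the legs suffices), and your explicit verification of it is sound.
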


\begin{proof}
  By Lemma~\ref{lem:dagger}, the wide subcategory of dagger monomorphisms in $\cat{D}[\mathcal{D}^{-1}]$ is isomorphic to the wide subcategory of dagger monomorphisms in $\cat{D}$, so it suffices to show that the wide subcategory of dagger monomorphisms in $\cat{D}$ has directed colimits.
  Let a diagram in $\cat{D}$ be given by dagger monomorphisms $t_{ij} \colon H_i \to H_j$ for all $j \geq i$ in some directed set. By axiom (11), this diagram has a colimit $c_i \colon H_i \to C$ in $\cat{D}$. We show that each of these morphisms is a dagger monomorphism too.

  Fix an index $k$. The diagram consisting of the dagger monomorphisms $t_{ij}$ for all $j \geq i \geq k$ has a colimit $c_i \colon H_i \to C$ for $i \geq k$. But the morphisms $t_{ki}^\dagger$ for $i \geq k$ form another cocone on this diagram because, whenever $j \geq i \geq k$, 
  \[
    t_{ki}^\dagger 
    = t_{ki}^\dagger \circ t_{ij}^\dagger \circ t_{ij} 
    = (t_{ij} \circ t_{ki} )^\dagger \circ t_{ij} 
    = t_{kj}^\dagger \circ t_{ij}\text.
  \]
  Thus, there is a morphism $m_k \colon C \to H_k$ such that $m_k \circ c_i = t_{ki}^\dagger$ for each $i \geq k$, and in particular, $m_k \circ c_k = t_{kk}^\dagger = \id_{H_k}$. Lemma~\ref{lem:daggeriso} now makes $c_k$ a dagger monomorphism.

  Thus the cocone $c_i$ lies within the wide subcategory of dagger monomorphisms in $\cat{D}[\mathcal{D}^{-1}]$.
  Finally, we show that it has the universal property of a colimit within that subcategory.
  Let $d_i\colon H_i \to D$ be any cocone of dagger monomorphisms. 
  This cocone factors through $c_i$ via a morphism $d\colon C \to D$ in $\cat{D}$.
  For all indices $i$ and $j$, there exists $k$ with $i \leq k \geq j$. Now
  \[
    c_j^\dag \circ c_i 
    = t_{jk}^\dag \circ c_k^\dag \circ c_k \circ t_{ik}
    = t_{jk}^\dag \circ d_k^\dag \circ d_k \circ t_{ik}
    = d_j^\dag \circ d_i
    = c_j^\dag \circ d^\dag \circ d \circ c_i\text.
  \]
  Being a colimit cocone in $\cat{D}$, the morphisms $c_i$ are jointly epic, and so the morphisms~$c_j^\dag$ are jointly monic. We conclude that $d^\dag \circ d = \id$.
\end{proof}

Having established that all necessary axioms hold, we can now apply previous results and establish an equivalence with $\cat{Hilb}$. We will describe this equivalence explicitly shortly. 

\begin{proposition}\label{prop:Hilb}
  There is an equivalence of dagger rig categories $\cat{D}[\mathcal{D}^{-1}] \simeq \cat{Hilb}$.
\end{proposition}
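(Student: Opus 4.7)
The plan is to apply the characterisation theorem of \cite{heunenkornell:hilbert}, which axiomatises $\cat{Hilb}$ by a list of categorical properties. Our strategy is thus to verify that $\cat{D}[\mathcal{D}^{-1}]$ satisfies each of those axioms and then invoke the theorem. No further analytical construction should be needed at this point; the equivalence should fall out of assembling the lemmas of Section~\ref{sec:completion}.

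Almost every hypothesis is already discharged by those lemmas: Lemma~\ref{lem:dagger} supplies the dagger; Lemma~\ref{lem:monoidal} the symmetric monoidal structure $(\otimes,I)$; Lemma~\ref{lem:biproducts} the dagger biproducts $(\oplus,0)$; Lemmas~\ref{lem:simple} and~\ref{lem:separator} the facts that $I$ is simple and a monoidal separator; Lemma~\ref{lem:equalisers} dagger equalisers; Lemma~\ref{lem:kernels} that every dagger monomorphism is a kernel; and Lemma~\ref{lem:colimits} that the wide subcategory of dagger monomorphisms admits directed colimits. Together these should match the hypotheses of the main theorem of \cite{heunenkornell:hilbert} one for one.

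A small amount of bookkeeping remains for the compatibility conditions that tie the dagger to the two monoidal structures, namely that the unitors, braidings, associators, and distributors are dagger isomorphisms and that $(\cdot)^\dag$ distributes over both $\otimes$ and $\oplus$. These descend to $\cat{D}[\mathcal{D}^{-1}]$ from $\cat{D}$ along the embedding, because that embedding is strict monoidal for both $\otimes$ (Lemma~\ref{lem:monoidal}) and $\oplus$ (Lemma~\ref{lem:biproducts}) and preserves the dagger (Lemma~\ref{lem:dagger}), so the coherence morphisms in the localisation are the images of those in $\cat{D}$ and inherit the required identities from axiom~\eqref{axiom:rig}.

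I expect the principal obstacle to be purely organisational: confirming that the axiom list of \cite{heunenkornell:hilbert} is exactly covered by the lemmas assembled above, with no stray hypothesis left unverified. Once that match is confirmed, the equivalence $\cat{D}[\mathcal{D}^{-1}] \simeq \cat{Hilb}$ is immediate from the cited characterisation.
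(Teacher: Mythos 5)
Your proposal matches the paper's proof exactly: the paper's entire argument is to combine Lemmas~\ref{lem:monoidal}--\ref{lem:colimits} and invoke the characterisation theorem of~\cite{heunenkornell:hilbert}. Your additional bookkeeping about the dagger rig coherences descending along the strict monoidal, dagger-preserving embedding is a correct elaboration of what the paper leaves implicit.
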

\begin{proof}
  Use Lemmas~\ref{lem:monoidal}--\ref{lem:colimits} to apply~\cite[Theorem~8]{heunenkornell:hilbert}. 
\end{proof}

\section{The theorem}\label{sec:theorem}

The previous section showed that if a category $\cat{D}$ satisfies~\eqref{axiom:dagger}--\eqref{axiom:colimits}, then the completion $\cat{C} = \cat{D}[\mathcal{D}^{-1}]$ is equivalent to $\cat{Hilb}_{\mathcal{C}}$ as a dagger rig category. Here $\mathcal{C} = \cat{C}(I, I)$ is an involutive field that is canonically isomorphic either to $\RR$ or to $\CC$ up to a choice of imaginary unit \cite[Proposition 5]{heunenkornell:hilbert}. To be explicit, the equivalence is implemented by the functor $\cat{C}(I, -)\colon \cat{C} \to \cat{Hilb}_{\mathcal C}$. The fact that this is an equivalence of dagger rig categories means the following. First, the functor is full and faithful, and any Hilbert space is unitarily isomorphic to $\cat{C}(I,H)$ for some object $H$ of $\cat{C}$.
Also, the functor preserves the dagger: $\cat{C}(I,f^\dag) = \cat{C}(I,f)^\dag$. Moreover, the functor is strong monoidal for $\otimes$, meaning that there are coherent isomorphisms $\cat{C}(I,H \otimes K) \simeq \cat{C}(I,H) \otimes \cat{C}(I,K)$ and $\cat{C}(I,I) \simeq \mathcal{C}$. Similarly, that the functor is strong monoidal for $\oplus$ means there are coherent isomorphisms $\cat{C}(I,H \oplus K) \simeq \cat{C}(I,H)\oplus \cat{C}(I,K)$ and $\cat{C}(I,0)\simeq 0$. Finally, modulo these coherence morphisms, the functor maps the distributors $H \otimes (K \oplus L) \to (H \otimes K) \oplus (H \otimes L)$ of $\cat{C}$ of axiom~\eqref{axiom:rig} to the canonical distributors in $\cat{Hilb}_{\mathcal C}$.

To complete the proof of the main theorem, in this section we will determine the image of $\cat{D}$ in $\cat{Hilb} = \cat{Hilb}_{\mathcal C}$.
The same proof applies to both the real and the complex cases.
To simplify the presentation, we identify $\cat{D}$ with its canonical image in $\cat{C}$.

First, we characterize $\mathcal{D} \subsetof \mathcal{C}$.

\begin{lemma}\label{lem:disc}
  We have that $\mathcal{D} = \{ z \in \mathcal{C} \mid |z| \leq 1\}$.
\end{lemma}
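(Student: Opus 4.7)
The plan is to prove both inclusions, making essential use of the isomorphism between the wide subcategories of dagger monomorphisms in $\cat{D}$ and $\cat{C} = \cat{D}[\mathcal{D}^{-1}]$ established in Lemma~\ref{lem:dagger}, together with the dagger biproducts from Lemma~\ref{lem:biproducts}.

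For the inclusion $\{z \in \mathcal{C} \mid |z| \leq 1\} \subseteq \mathcal{D}$, the idea is to exhibit such a $w$ as the first component of a dagger monomorphism $I \to I \oplus I$ living already in $\cat{D}$. Because $\mathcal{C}$ is a field canonically isomorphic to $\RR$ or $\CC$, the non-negative real scalar $1 - w^\dag \comp w$ has a positive square root $v$ in $\mathcal{C}$. Using the biproduct pairing in $\cat{C}$, form the morphism $u = \langle w, v \rangle \colon I \to I \oplus I$ characterised by $\inl^\dag \comp u = w$ and $\inr^\dag \comp u = v$. A short calculation using the biproduct identity $\id_{I \oplus I} = \inl \comp \inl^\dag + \inr \comp \inr^\dag$ gives $u^\dag \comp u = w^\dag \comp w + v^\dag \comp v = 1$, so $u$ is a dagger monomorphism in $\cat{C}$. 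Lemma~\ref{lem:dagger} then produces a dagger monomorphism $u'$ in $\cat{D}$ whose image is $u$, and $\inl^\dag \comp u' \in \mathcal{D}$ has image $\inl^\dag \comp u = w$ in $\mathcal{C}$, witnessing $w \in \mathcal{D}$.

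For the opposite inclusion, I would argue by contradiction. Suppose $z \in \mathcal{D}$ satisfies $|z| > 1$, and set $\lambda = z^\dag \comp z \in \mathcal{D}$, whose image in $\mathcal{C}$ is the positive real $|z|^2 > 1$. Its reciprocal $\lambda^{-1} \in \mathcal{C}$ then satisfies $|\lambda^{-1}| < 1$, so the inclusion just proved yields $\lambda^{-1} \in \mathcal{D}$. Since the embedding $\cat{D} \hookrightarrow \cat{C}$ is faithful, the identity $\lambda \comp \lambda^{-1} = \id_I$ of $\cat{C}$ lifts to $\cat{D}$, so $\lambda$ is an isomorphism there. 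Lemma~\ref{lem:daggeriso} then forces $\lambda$ to be a dagger isomorphism, so $\lambda \comp \lambda^\dag = 1$; since $\lambda$ is self-adjoint, this gives $\lambda^2 = 1$ in $\mathcal{C}$, hence $\lambda = 1$, contradicting $\lambda > 1$.

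The main obstacle is the first inclusion: verifying abstractly that the pairing $u$ is a well-defined dagger monomorphism in $\cat{C}$, rather than smuggling in concrete Hilbert space reasoning. This rests on Lemma~\ref{lem:biproducts} providing the pairing, on $\mathcal{C}$ admitting square roots of non-negative reals (since $\mathcal{C} \cong \RR$ or $\CC$), and on the standard dagger biproduct identity that reduces $u^\dag \comp u$ to a sum of components.
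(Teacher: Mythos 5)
Your proof is correct, and for the harder inclusion it takes a genuinely different route from the paper. For $\{z \mid |z| \leq 1\} \subseteq \mathcal{D}$ you and the paper do essentially the same thing: realise $z$ via an isometry $I \to I \oplus I$ that lives in $\cat{D}$ by Lemma~\ref{lem:dagger}. The paper splits into the cases $z \in [0,1]$ (components $\sqrt{z}$, $\sqrt{1-z}$) and $|z|=1$, whereas you handle all $|z|\leq 1$ at once with components $z$ and $\sqrt{1-|z|^2}$; both work, and your worry about ``smuggling in'' Hilbert-space reasoning is moot since $\cat{C} \simeq \cat{Hilb}$ is already established by Proposition~\ref{prop:Hilb}, so the paper itself argues concretely here. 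The real divergence is in $\mathcal{D} \subseteq \{z \mid |z| \leq 1\}$. The paper invokes axiom~\eqref{axiom:colimits}: it builds the directed colimit $H$ of the finite biproducts $I^R$ for $R \subseteq \NN$ and a morphism $t \colon H \to H$ whose induced operator has eigenvalues $z^n$ on nonzero eigenvectors, so boundedness forces $|z| \leq 1$. You instead invoke axiom~\eqref{axiom:positive} via Lemma~\ref{lem:daggeriso}: if $|z|>1$ then $\lambda = z^\dagger \comp z$ has $\lambda^{-1} \in (0,1) \subseteq \mathcal{D}$ by the first inclusion, faithfulness of the embedding makes $\lambda$ invertible in $\cat{D}$, hence a dagger isomorphism, and self-adjointness gives $\lambda^2 = 1$, contradicting $\lambda > 1$. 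This is a genuine and substantially shorter argument: it is entirely ``finitary'', avoiding the infinite-biproduct and eigenvalue machinery. Both proofs necessarily lean on one of the two axioms that fail in $\cat{Hilb}$ (the paper on~\eqref{axiom:colimits}, you on~\eqref{axiom:positive}), since axioms~\eqref{axiom:dagger}--\eqref{axiom:kernels} alone cannot bound the scalars. What the paper's heavier route buys is mainly thematic continuity with the colimit arguments of Lemma~\ref{lem:colimiting} and Lemma~\ref{lem:contractions}; your version is a legitimate simplification of this particular lemma.
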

\begin{proof}
  Let $z \in [0,1] \subsetof \mathcal C$ be an element of the unit interval. The morphism $v \colon I \to I \oplus I$ with components $\sqrt z$ and $\sqrt{1 - z}$ is a dagger monomorphism and thus in $\cat{D}$. It follows that $z = v^\dagger \comp (1 \oplus 0) \comp v$ is also in $\cat{D}$. We conclude that $\mathcal{D} \supseteq \{z \in \mathcal{C} \mid 0 \leq z \leq 1\}$. We also know that $\mathcal{D} \supseteq \{z \in \mathcal{C} \mid |z| = 1\}$ because $\cat{D}$ contains all isometries. Therefore, $\mathcal{D} \supseteq \{z \in \mathcal{C} \mid |z| \leq 1\}$.

  For the reverse inclusion, recall that for any set $A$ we have a directed diagram in $\cat{C}$ that assigns an object $I^R$ to each finite $R \subseteq A$, namely the biproduct of $R$ many copies of $I$. The diagram consists of dagger monomorphisms $i_{R,S}\colon I^R \to I^S$ for finite $R \subseteq S$. For its colimit among the dagger monomorphisms of $\cat{C}$, write $i_{R,A}\colon I^R \to I^A$.

  Because the morphisms of the diagram are dagger monomorphisms, it is also a diagram in $\cat{D}$. Write $j_R \colon I^R \to H$ for the colimit of this diagram in $\cat{D}$. A priori, the objects $H$ and $I^A$ may not be isomorphic. We now specialize to the case $A = \NN$.

  Let $z \in \mathcal{D}$.  Without loss of generality, assume that the objects $I^R$, for finite $R \subseteq \NN$, are constructed using the canonical monoidal product $\oplus$ on $\cat{D}$. Construct a natural transformation $\{t_R \colon I^R \to I^R\}_{R \subseteq \NN}$ in $\cat{D}$ such that $t_{\{n\}}$ is scalar multiplication by $z^n$ for each $n \in \NN$. In the colimit, we obtain a morphism $t\colon H \to H$ such that the following square always commutes:
  \[\begin{tikzcd}
    I^{\{n\}} \arrow{d}[swap]{t_{\{n\}}} \arrow{r}{j_{\{n\}}}
    & H \arrow{d}{T} \\
    I^{\{n\}} \arrow{r}[swap]{j_{\{n\}}}
    & H
  \end{tikzcd}\]
  As in~\cite{heunenkornell:hilbert}, $I^{\{n\}} = I$ for each $n  \in \NN$. Thus, $j_{\{n\}}$ is a vector in the Hilbert space $\cat{C}(I, H)$. The wide subcategory of $\cat{C}$ with dagger monomorphisms is a subcategory of $\cat{D}$, and thus $i_{\{n\},\NN}$ factors through $j_{\{n\}}$. Therefore the vector $j_{\{n\}}$ is nonzero.

  The operator $T = \cat{C}(I, t)$ satisfies $T(j_{\{n\}}) = t \comp j_{\{n\}} = j_{\{n\}} \comp t_{\{n\}}= z^n \smult j_{\{n\}}$. In other words, $j_{\{n\}}$ is an eigenvector of $T$ with eigenvalue $z^n$. Thus, $\|T\| \geq |z^n| = |z|^n$ for all $n \in \NN$. But $T$ is bounded, so we must have $|z| \leq 1$. Therefore, this shows that $\mathcal{D} \subseteq \{z \in \mathcal{C} \mid |z| \leq 1\}$. Altogether, we have equality.
\end{proof}

Our next goal is to show that $\cat{D}$ cannot contain morphisms $T$ of $\cat{Hilb}$ with $\|T\|>1$. To engineer a counterexample, we first examine the situation in $\cat{Con}$.

\begin{example}
  Let $0 < z_1 < z_2 < z_2 < \ldots < 1$ be an increasing sequence in $(0,1]$, and let $z_\infty$ be its supremum. The following is a directed diagram in $\cat{Con}$:
  \[\begin{tikzcd}[column sep=2cm]
    I \arrow{r}{z_1/z_2}
    & I \arrow{r}{z_2/z_3}
    & I \arrow{r}
    & \cdots
  \end{tikzcd}\]
  This diagram admits two cocones, among others: first, a cocone into $I$ whose edges are simply the scalars $z_1, z_2, z_3, \ldots$; second, a cocone into $I$ whose edges are the scalars $z_1/z_\infty, z_2/z_\infty, z_3/z_\infty, \ldots$. This gives the following diagram:
  \[\begin{tikzcd}[column sep=2cm]
    I & I & I & \cdots \\
    &&& I \\
    &&& I
    \arrow["{z_1/z_2}", from=1-1, to=1-2]
    \arrow["{z_2/z_3}", from=1-2, to=1-3]
    \arrow["{z_\infty}", dashed, from=2-4, to=3-4]
    \arrow[from=1-3, to=1-4]
    \arrow["{z_1}"{description}, curve={height=24pt}, from=1-1, to=3-4]
    \arrow["{z_2}"{description}, curve={height=18pt}, from=1-2, to=3-4]
    \arrow["{z_3}"{description}, curve={height=12pt}, from=1-3, to=3-4]
    \arrow[halo, "{z_3/z_\infty}"{description}, from=1-3, to=2-4]
    \arrow[halo, curve={height=6pt}, from=1-2, to=2-4]
    \arrow[halo, curve={height=6pt}, from=1-1, to=2-4]
    \arrow["{z_3/z_\infty}"{description}, from=1-3, to=2-4]
    \arrow[curve={height=6pt}, from=1-2, to=2-4]
    \arrow[curve={height=6pt}, from=1-1, to=2-4]
  \end{tikzcd}\]
  It is routine to verify that $z_1/z_\infty, z_2/z_\infty, z_3/z_\infty, \ldots$ is a colimiting cocone.
\end{example}

The following lemma demonstrates the same phenomenon in $\cat{D}$.

\begin{lemma}\label{lem:colimiting}
  The functor $\cat{C}(I, -)\colon \cat{C} \to \cat{Hilb}$ restricts to a functor $\cat{D} \to \cat{Con}$. Furthermore, for all objects $H$ and all sequences $0 < z_1 < z_2 < \ldots$ with $\sup z_n = 1$, the following is a colimiting cocone in $\cat{D}$:
  \[\begin{tikzcd}[column sep=2cm]
    H & H & H & \cdots \\
    &&& H
    \arrow["{(z_1/z_2)\smult\id}", from=1-1, to=1-2]
    \arrow["{(z_2/z_3)\smult\id}", from=1-2, to=1-3]
    \arrow[from=1-3, to=1-4]
    \arrow["{z_3 \smult \id}"{description}, from=1-3, to=2-4]
    \arrow["{z_2\smult\id}"{description}, curve={height=6pt}, from=1-2, to=2-4]
    \arrow["{z_1\smult\id}"{description}, curve={height=10pt}, from=1-1, to=2-4]
  \end{tikzcd}\]
\end{lemma}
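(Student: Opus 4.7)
I will verify the two claims in order. For the first, suppose $t\colon H \to K$ lies in $\cat{D}$ and $x\colon I \to H$ is a nonzero vector in $\cat{C}$; write $x = \|x\|\smult u$ with $u$ the corresponding unit vector. Since $u$ is a dagger monomorphism in $\cat{C}$, Lemma~\ref{lem:dagger} places $u$ in $\cat{D}$, so $t \comp u \in \cat{D}$ and the scalar $(t \comp u)^\dag \comp (t \comp u)$ lies in $\mathcal{D}$. Lemma~\ref{lem:disc} then gives $\|t \comp u\| \leq 1$, and therefore $\|t \comp x\| \leq \|x\|$.

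For the colimit claim, axiom~\eqref{axiom:colimits} provides a colimit $(G, c_n)$ of the diagram in $\cat{D}$, and the universal property produces a unique $\phi\colon G \to H$ in $\cat{D}$ with $\phi \comp c_n = z_n \smult \id$. I will show that $\phi$ is an isomorphism in $\cat{D}$, from which the lemma follows by uniqueness of colimits. A direct check shows that in $\cat{Hilb}$ this diagram has colimit $(H, z_n \smult \id)$: every cocone $(K, T_n)$ there satisfies $T_n = (z_n/z_1) T_1$ and admits the unique mediator $T_1/z_1$. Applied to $(G, c_n)$ viewed in $\cat{C} \simeq \cat{Hilb}$, this yields a unique $\psi \colon H \to G$ with $c_n = z_n \smult \psi$, and uniqueness of mediators forces $\phi \comp \psi = \id_H$. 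The first claim gives $\|c_n\| \leq 1$, and $\|c_n\| = z_n \|\psi\|$ implies $\|\psi\| \leq 1$; then $\phi \comp \psi = \id_H$ with both factors contractive forces $\psi$ to be an isometry.

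The crucial remaining step is to show that $\psi$ has dense image in $G$. The inclusion $\iota \colon \overline{\mathrm{im}\,\psi} \hookrightarrow G$ is an isometry in $\cat{C} \simeq \cat{Hilb}$, hence a dagger monomorphism, hence lifts into $\cat{D}$ by Lemma~\ref{lem:dagger}. Its adjoint $\iota^\dag$ is also in $\cat{D}$, so the projection $\iota \comp \iota^\dag \colon G \to G$ is a morphism in $\cat{D}$ that acts as the identity on $\mathrm{im}\,\psi$. Since each $c_n$ factors through $\mathrm{im}\,\psi$, we get $\iota \comp \iota^\dag \comp c_n = c_n = \id_G \comp c_n$ for all $n$, and joint epimorphy of the colimit cocone in $\cat{D}$ forces $\iota \comp \iota^\dag = \id_G$, i.e.\ $\overline{\mathrm{im}\,\psi} = G$. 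Thus $\psi$ is a surjective isometry, i.e.\ a unitary, with $\phi = \psi^{-1} = \psi^\dag$; closure of $\cat{D}$ under $\dag$ then yields $\phi^{-1} \in \cat{D}$. The main obstacle is precisely this density argument: upgrading $\psi$ from an isometry (obtained via norm estimates) to a unitary by leveraging joint epimorphy of the colimit cocone in $\cat{D}$ together with the dagger monomorphism lifting from Lemma~\ref{lem:dagger}.
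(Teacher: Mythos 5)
Your proof is correct, and while its overall shape parallels the paper's, the key step is handled by a genuinely different mechanism. Both arguments take the abstract colimit $(G, c_n)$ supplied by axiom~\eqref{axiom:colimits}, obtain the mediator $\phi \colon G \to H$ in $\cat{D}$ with $\phi \comp c_n = z_n \smult \id$, and must invert it. The paper constructs the reverse map as the mediator out of the cocone $(H, z_n \smult \id)$ taken \emph{in $\cat{Con}$}, which requires knowing that this cocone is already colimiting in $\cat{Con}$ (the ``routine'' verification of the preceding Example, generalised from $I$ to arbitrary $H$); it then shows that mediator is an isometry, lifts it into $\cat{D}$ as a dagger monomorphism via Lemma~\ref{lem:dagger}, and closes the loop using joint epicness of the colimit legs. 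You instead use only the trivial fact that $(H, z_n \smult \id)$ is colimiting in $\cat{Hilb} \simeq \cat{C}$ (all connecting maps being invertible there), which yields $\psi$ with $\phi \comp \psi = \id_H$ essentially for free; the price is that you must separately prove surjectivity of $\psi$, which you do by observing that the range projection $\iota \comp \iota^\dag$ lies in $\cat{D}$, fixes every $c_n$, and hence equals $\id_G$ by joint epicness of the colimiting cocone. The ingredients are the same in both cases (the contraction bound from the first half, the lifting of dagger monomorphisms in Lemma~\ref{lem:dagger}, joint epicness of colimit legs), but your version trades the $\cat{Con}$-colimit computation for an explicit density argument. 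Your first half is essentially the paper's argument stated directly rather than by contradiction. I find no gap.
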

\begin{proof}
  Let $t \colon H \to K$ be a morphism in $\cat{D}$. Assume that $\|\cat{C}(I, t)\| > 1$, and let $x \in \cat{C}(I, H)$ be such that $\|x\| = 1$ and $\|t \comp x\| > 1$. Now $x^\dagger \comp x = \| x \|^2 = 1$, so $x$ is dagger monic and hence in $\cat{D}$. Furthermore $x^\dagger \comp t^\dagger \comp t \comp x  = \| t \comp x \|^2 > 1$. But $x^\dagger \comp t^\dagger \comp t \comp x$ is a scalar in $\cat{D}$, contradicting Lemma~\ref{lem:disc}.
  We conclude that $\|\cat{C}(I, t)\| \leq 1$, and that $\cat{C}(I, -)$ restricts to a functor $\cat{D} \to \cat{Con}$.

  Let $0 < z_1 < z_2 < \ldots$ be an increasing sequence of numbers with supremum $1$. The morphisms $(z_n/z_{n+1}) \smult \id$ are all in $\cat{D}$ by Lemma~\ref{lem:disc}. They form a directed diagram, which has a colimiting cocone $t_n \colon H \to K$. Now $z_n \smult \id \colon H \to H$ forms another cocone. Thus there is a unique mediating morphism $s \colon K \to H$ in $\cat{D}$:
  \[\begin{tikzcd}[column sep=25mm]
    H & H & H & \cdots \\
    &&& K \\
    &&& H
    \arrow["{(z_1/z_2) \smult \id}", from=1-1, to=1-2]
    \arrow["{(z_2/z_3) \smult \id}", from=1-2, to=1-3]
    \arrow["{s}", dashed, from=2-4, to=3-4]
    \arrow[from=1-3, to=1-4]
    \arrow["{z_1 \smult \id}"{description}, curve={height=24pt}, from=1-1, to=3-4]
    \arrow["{z_2 \smult \id}"{description}, curve={height=18pt}, from=1-2, to=3-4]
    \arrow["{z_3 \smult \id}"{description}, curve={height=12pt}, from=1-3, to=3-4]
    \arrow[halo, "t_3"{description}, from=1-3, to=2-4]
    \arrow[halo, curve={height=6pt}, from=1-2, to=2-4]
    \arrow[halo, curve={height=6pt}, from=1-1, to=2-4]
    \arrow["t_3"{description}, from=1-3, to=2-4]
    \arrow["t_2"{description}, pos=.3, curve={height=6pt}, from=1-2, to=2-4]
    \arrow["t_1"{description}, pos=.25, curve={height=6pt}, from=1-1, to=2-4]
  \end{tikzcd}\]
  The functor $\cat{C}(I, -)\colon \cat{D} \to \cat{Con}$ maps the cocone $z_n \smult \id \colon H \to H$ to a colimiting cocone in $\cat{Con}$. Thus there is a mediating bounded operator $T \colon \cat{C}(I, H) \to \cat{C}(I, K)$:
  \[\begin{tikzcd}[column sep=20mm]
    \cat{C}(I,H) & \cat{C}(I,H) & \cat{C}(I,H) & \cdots \\
    &&& \cat{C}(I,H) \\
    &&& \cat{C}(I,K) \\
    &&& \cat{C}(I,H)
    \arrow["{(z_1/z_2) \mathrm{Id}}", from=1-1, to=1-2]
    \arrow["{(z_2/z_3) \mathrm{Id}}", from=1-2, to=1-3]
    \arrow["{T}", dashed, from=2-4, to=3-4]
    \arrow["{\cat{C}(I,s)}", dashed, from=3-4, to=4-4]
    \arrow[from=1-3, to=1-4]
    \arrow[curve={height=24pt}, from=1-1, to=3-4]
    \arrow["{\cat{C}(I,t_2)}"{description}, curve={height=18pt}, from=1-2, to=3-4]
    \arrow[curve={height=12pt}, from=1-3, to=3-4]
    \arrow[halo, "{z_1\mathrm{Id}}"{description}, curve={height=24pt}, from=1-1, to=4-4]
    \arrow[halo, curve={height=18pt}, from=1-2, to=4-4]
    \arrow[halo, curve={height=12pt}, from=1-3, to=4-4]
    \arrow[halo, "{z_3\mathrm{Id}}"{description}, from=1-3, to=2-4]
    \arrow[halo, curve={height=6pt}, from=1-2, to=2-4]
    \arrow[halo, curve={height=6pt}, from=1-1, to=2-4]
    \arrow["{z_3\mathrm{Id}}"{description}, from=1-3, to=2-4]
    \arrow[curve={height=6pt}, from=1-2, to=2-4]
    \arrow[curve={height=6pt}, from=1-1, to=2-4]
    \arrow["{z_1\mathrm{Id}}"{description}, curve={height=24pt}, from=1-1, to=4-4]
  \end{tikzcd}\]
  The universal property now implies that $\cat{C}(I, s) \comp T$ is the identity on $\cat{C}(I, H)$. Because both $\cat{C}(I, s)$ and $T$ are contractions, $T$ must be an isometry. It follows that there is a dagger monomorphism $t \colon H \to K$ in $\cat{C}$, and hence in $\cat{D}$, with $\cat{C}(I, t) = T$. 

  Thus, the morphisms $s$ and $t$ in $\cat{D}$ satisfy $\cat{C}(I, s \comp t) = \cat{C}(I, s) \comp \cat{C}(I, t) = \cat{C}(I, \id)$. Because the functor $\cat{C}(I, -)$ is faithful, $s \comp t = \id$. In the same way, we find that $\cat{C}(I, t_n) = \cat{C}(I, t) \comp (z_n \id) = \cat{C}(I, z_n \smult t)$, so $t_n = t \comp (z_n \smult \id) = t \comp s \comp t_n$, concluding that $t \comp s = \id$. Since $t$ is dagger monic, both $s$ and $t$ are dagger isomorphisms that are inverse to each other. Thus the cocone $z_n \smult \id \colon H \to H$ is colimiting in $\cat{D}$.
\end{proof}

Finally, we show that the image of $\cat{D}$ in $\cat{Hilb}$ contains all contractions.

\begin{lemma}\label{lem:contractions}
  For all objects $H$ and $K$, $\cat{D}(H, K) = \{t \in \cat{C}(H, K) \mid \|\cat{C}(I, t) \| \leq 1\}$.
\end{lemma}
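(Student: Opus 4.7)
The plan is to establish the two inclusions separately. The inclusion $\cat{D}(H, K) \subseteq \{t \in \cat{C}(H, K) \mid \|\cat{C}(I, t)\| \leq 1\}$ is already the content of the first assertion of Lemma~\ref{lem:colimiting}, so all that remains is the converse: every $t \in \cat{C}(H, K)$ with $T := \cat{C}(I, t)$ satisfying $\|T\| \leq 1$ must lie in $\cat{D}$.

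For this, I would use a Halmos-style isometric dilation. Proposition~\ref{prop:Hilb} gives $\cat{C} \simeq \cat{Hilb}$ as monoidal dagger categories, so we may import the Hilbert-space functional calculus: the positive operator $\id_H - t^\dag \comp t$ admits a positive square root $p \colon H \to H$ in $\cat{C}$. Using the dagger biproduct of Lemma~\ref{lem:biproducts}, let $v \colon H \to K \oplus H$ be the unique morphism with $\inl^\dag \comp v = t$ and $\inr^\dag \comp v = p$. A direct calculation then yields
\[
  v^\dag \comp v \;=\; t^\dag \comp t + p \comp p \;=\; t^\dag \comp t + (\id_H - t^\dag \comp t) \;=\; \id_H,
\]
so $v$ is a dagger monomorphism in $\cat{C}$.

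By Lemma~\ref{lem:dagger}, the embedding $\cat{D} \hookrightarrow \cat{C}$ restricts to an isomorphism on the wide subcategories of dagger monomorphisms, so $v$ already lies in $\cat{D}$. The injection $\inl \colon K \to K \oplus H$ lies in $\cat{D}$ by axioms~\eqref{axiom:rig} and~\eqref{axiom:affine}, and because $\cat{D}$ is a dagger category (axiom~\eqref{axiom:dagger}) its adjoint $\inl^\dag$ is in $\cat{D}$ as well. Reading off the first component gives $\inl^\dag \comp v = t$, so $t$ is a composite of two $\cat{D}$-morphisms, hence $t \in \cat{D}(H, K)$.

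The main obstacle is really the availability of the positive square root $p$. Constructing it directly from the axioms would be awkward, but invoking Proposition~\ref{prop:Hilb} makes it immediate. If one instead wishes to stay closer to the categorical setting, the colimit assertion of Lemma~\ref{lem:colimiting} provides a fallback: it would suffice to show $z_n \smult t \in \cat{D}$ for an increasing sequence of scalars $0 < z_n < 1$ with $\sup z_n = 1$, since the universal property of the colimiting cocone $\{z_n \smult \id_H\}$ would then force the mediating $\cat{D}$-morphism from $H$ to $K$ to coincide with $t$.
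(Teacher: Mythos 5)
Your argument is correct, but it takes a genuinely different---and shorter---route than the paper's. For the nontrivial inclusion you use the Halmos dilation: the morphism $v$ with components $t$ and $p = (\id_H - t^\dag \comp t)^{1/2}$ satisfies $v^\dag \comp v = \id_H$, so $v$ is a dagger monomorphism of $\cat{C}$, hence lies in $\cat{D}$ by Lemma~\ref{lem:dagger}, and $t = \inl^\dag \comp v$ is then a composite of $\cat{D}$-morphisms. This treats $\|\cat{C}(I,t)\| < 1$ and $\|\cat{C}(I,t)\| = 1$ uniformly. The paper instead argues in three stages: endomorphisms of norm strictly below $1$ are written as averages of dagger isomorphisms via the Russo--Dye--Gardner theorem and realised inside $\cat{D}$ as $w^\dag \comp (u_1 \oplus \cdots \oplus u_n) \comp w$ for a dagger monomorphism $w$; general morphisms of norm below $1$ are then reached by polar decomposition; and norm exactly $1$ is handled by the colimit statement of Lemma~\ref{lem:colimiting}, exhibiting $t$ as the mediating morphism of the cocone $z_n \smult t$ with $z_n$ increasing to $1$---which is precisely the ``fallback'' you sketch in your last paragraph. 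Both proofs rest on the same two pillars, namely importing an operator-theoretic decomposition through the equivalence $\cat{C}(I,-)$ of Proposition~\ref{prop:Hilb} and the fact from Lemma~\ref{lem:dagger} that every dagger monomorphism of $\cat{C}$ already lives in $\cat{D}$; but yours needs only the positive square root of $\id_H - t^\dag \comp t$ in place of Russo--Dye--Gardner and polar decomposition, and it renders the colimit half of Lemma~\ref{lem:colimiting} dispensable (though not its first assertion, which you still invoke for the easy inclusion). Two minor points to tidy up: the identity $v^\dag \comp v = t^\dag \comp t + p^\dag \comp p = \id_H$ should be justified either in the commutative-monoid enrichment supplied by the dagger biproducts of Lemma~\ref{lem:biproducts} or, more simply, by verifying it after applying the faithful functor $\cat{C}(I,-)$; and the positivity of $\id_H - t^\dag \comp t$, hence the existence of $p$, is exactly where the hypothesis $\|\cat{C}(I,t)\| \leq 1$ enters, so it deserves an explicit mention.
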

\begin{proof}
  It remains to show that morphisms $t$ of $\cat{C}$ with $\|\cat{C}(I, t)\| \leq 1$ are in $\cat{D}$. First, suppose $t \colon H \to H$ in $\cat{C}$ satisfies $\|\cat{C}(I, t)\|< 1$. It follows that $\cat{C}(I,t)$ is a convex combination of unitary operators on the Hilbert space $\cat{C}(I, H)$: there exist unitary operators $U_1, \ldots, U_n$ and positive real numbers $z_1, \ldots, z_n$ such that $z_1 + \cdots + z_n = 1$ and $z_1 U_1 + \cdots + z_n U_n = \cat{C}(I,t)$ \cite[Theorem~13]{navarro:real}. The operator \begin{align*} W \colon \cat{C}(I, H) {}& \to \cat{C}(I, H) \oplus \ldots \oplus \cat{C}(I, H) \\ x {}& \mapsto (\sqrt{z_1} x, \cdots, \sqrt{z_n} x)\end{align*} then satisfies $W^\dagger \comp (T_1 \oplus \cdots \oplus T_n) \comp W = z_1 T_1 + \cdots + z_n T_n$ for all operators $T_1, \ldots, T_n$ on $\cat{C}(I, H)$. Furthermore, $W$ is an isometry. Now~\cite[Theorem~8]{heunenkornell:hilbert} provides dagger isomorphisms $u_1 \ldots, u_n \colon H \to H$ in $\cat{C}$ such that $U_i = \cat{C}(I, u_i)$ for each $i$ and a dagger isometry $w \colon H \to H \oplus \cdots \oplus H$ in $\cat{C}$ such that $W = \cat{C}(I, w)$. By Lemma~\ref{lem:dagger}, $u_1, \ldots, u_n$, and $w$ are morphisms in $\cat{D}$, and thus, $w^\dagger \circ (u_1 \oplus \ldots \oplus u_n)\circ w$ is a morphism in $\cat{D}$. Also,
  $
  \cat{C}(I, w^\dagger \circ (u_1 \oplus \ldots \oplus u_n)\circ w) = W^\dagger \comp (U_1 \oplus \cdots \oplus U_n) \comp W = z_1 U_1 + \cdots + z_n U_n = \cat{C}(I,t),
  $
  so $w^\dagger \circ (u_1 \oplus \ldots \oplus u_n)\circ w = t$. We conclude that $t$ is in $\cat{D}$ and, more generally, that any endomorphism $t$ of $\cat{C}$ with $\|\cat{C}(I, t)\| < 1$ is in $\cat{D}$.

  Next, suppose $t \colon H \to K$ in $\cat{C}$ satisfies $\|\cat{C}(I, t)\|<1$. 
  Using polar decomposition, any operator $T$ in $\cat{Hilb}$ factors as $T = U \comp V^\dagger \comp S$, where $U$ and $V$ are isometries and $S$ is a bounded operator on the domain of $T$ such that $\|S\| = \|T\|$. 
  Because $\cat{C}(I, -) \colon \cat{C} \to \cat{Hilb}$ is an equivalence of monoidal dagger categories, $t$ factors as $ t = u \comp v^\dagger \comp s$, where $u$ and $v$ are dagger monomorphisms and $s$ is an endomorphism such that $\|\cat{C}(I, s) \| < 1$. Thus, $t$ is in $\cat{D}$. Therefore, any morphism $t$ in $\cat{C}$ with $\|\cat{C}(I, t)\| <1$ is in $\cat{D}$.

  Finally, suppose $t \colon H \to K$ in $\cat{C}$ satisfies $\|\cat{C}(I, t)\| = 1$. Let $0 < z_1 < z_2 < \ldots$ be a sequence of numbers with supremum $1$. For each $0 <z < 1$, we calculate that $\|\cat{C}(I, z \smult t) \| = \| z \cat{C}(I, t) \| = z \|\cat{C}(I,t)\| < 1$; hence $z \smult t \in \cat{D}(H, K)$. Therefore, the following diagram lies entirely in $\cat{D}$:
  \[\begin{tikzcd}[column sep=25mm]
    H & H & H & \cdots \\
    &&& H \\
    &&& K
    \arrow["{(z_1/z_2) \smult \id}", from=1-1, to=1-2]
    \arrow["{(z_2/z_3) \smult \id}", from=1-2, to=1-3]
    \arrow["{s}", dashed, from=2-4, to=3-4]
    \arrow[from=1-3, to=1-4]
    \arrow["{z_1 \smult t}"{description}, curve={height=24pt}, from=1-1, to=3-4],
    \arrow["{z_2 \smult t}"{description}, curve={height=18pt}, from=1-2, to=3-4]
    \arrow["{z_3 \smult t}"{description}, curve={height=12pt}, from=1-3, to=3-4]
    \arrow[halo, "z_3 \smult \id"{description}, from=1-3, to=2-4]
    \arrow[halo, "z_2 \smult \id"{description}, pos=.3, curve={height=6pt}, from=1-2, to=2-4]
    \arrow[halo, "z_1 \smult \id"{description}, pos=.25, curve={height=6pt}, from=1-1, to=2-4]
    \arrow["z_3 \smult \id"{description}, from=1-3, to=2-4]
    \arrow["z_2 \smult \id"{description}, pos=.3, curve={height=6pt}, from=1-2, to=2-4]
    \arrow["z_1 \smult \id"{description}, pos=.25, curve={height=6pt}, from=1-1, to=2-4]
  \end{tikzcd}\]
  Lemma~\ref{lem:colimiting} provides a unique morphism $s \in \cat{D}(H, K) \subseteq \cat{C}(H, K)$ making this diagram commute. Thus, $z_1 \smult s = s \comp (z_1 \smult \id) = z_1 \smult t$. Since $z_1 \neq 0$, we conclude by Lemma~\ref{lem:scalarmultiplicationmonic} that $t=s$. Hence $t$ is in $\cat{D}$. Therefore, any morphism $t$ of $\cat{C}$ with $\|\cat{C}(I, t)\| \leq 1$ is in $\cat{D}$.
\end{proof}

We have arrived at the main theorem of this article.

\begin{theorem}\label{thm:main}
  A dagger rig category $\cat{D}$ satisfying axioms~\eqref{axiom:dagger}--\eqref{axiom:colimits} is equivalent to $\cat{Con}_\RR$ or $\cat{Con}_\CC$, and the equivalence preserves daggers and is strong symmetric monoidal for both $\otimes$ and $\oplus$ and preserves the distributors of axiom~\eqref{axiom:rig}.
\end{theorem}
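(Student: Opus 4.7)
The plan is to package the results of Sections~\ref{sec:completion} and~\ref{sec:theorem} into a single equivalence. Proposition~\ref{prop:Hilb} together with \cite[Theorem~8]{heunenkornell:hilbert} supplies an equivalence of symmetric monoidal dagger categories $F = \cat{C}(I,-) \colon \cat{C} \to \cat{Hilb}_{\mathcal{C}}$, where $\cat{C} = \cat{D}[\mathcal{D}^{-1}]$ and $\mathcal{C}$ is either $\RR$ or $\CC$ by \cite[Proposition~5]{heunenkornell:hilbert}. I would compose $F$ with the embedding $\cat{D} \hookrightarrow \cat{C}$ of Proposition~\ref{prop:completion} to obtain a functor $G \colon \cat{D} \to \cat{Hilb}_{\mathcal{C}}$, and immediately observe that Lemma~\ref{lem:contractions} forces the image of $G$ to land inside the wide subcategory $\cat{Con}_{\mathcal{C}} \subsetof \cat{Hilb}_{\mathcal C}$.

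Next I would verify that $G \colon \cat{D} \to \cat{Con}_{\mathcal{C}}$ is an equivalence by checking faithfulness, essential surjectivity, and fullness. Faithfulness is automatic because $F$ is faithful and $\cat{D} \hookrightarrow \cat{C}$ is injective on hom-sets. Essential surjectivity follows because $\cat{D}$ and $\cat{C}$ have the same objects, $F$ is essentially surjective onto $\cat{Hilb}_{\mathcal{C}}$, and $\cat{Con}_{\mathcal{C}}$ has the same objects as $\cat{Hilb}_{\mathcal{C}}$. For fullness, given a contraction $T \colon G(H) \to G(K)$, use fullness of $F$ to produce a unique $t \colon H \to K$ in $\cat{C}$ with $F(t) = T$; the inequality $\|\cat{C}(I, t)\| = \|T\| \leq 1$ together with Lemma~\ref{lem:contractions} puts $t$ in $\cat{D}(H, K)$, giving the desired preimage.

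Finally, I would verify preservation of the structural data. Dagger preservation follows from Lemma~\ref{lem:dagger} (the embedding preserves daggers) combined with the fact that $F$ is a dagger equivalence. Strong symmetric monoidality for $\otimes$ follows from Lemma~\ref{lem:monoidal} (the embedding is strict monoidal for $\otimes$) composed with strong monoidality of $F$ for $\otimes$. For $\oplus$, Lemma~\ref{lem:biproducts} identifies $\oplus$ on $\cat{C}$ as the dagger biproduct and makes the embedding strict monoidal for $\oplus$; since any dagger equivalence preserves the universal property of dagger biproducts, $F$ sends $\oplus$ in $\cat{C}$ to the direct sum in $\cat{Hilb}_{\mathcal{C}}$, which restricts to the direct sum on $\cat{Con}_{\mathcal{C}}$. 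The main obstacle has in fact already been surmounted in Lemmas~\ref{lem:contractions} and~\ref{lem:colimiting}; what remains here is essentially bookkeeping, the only real subtlety being that Proposition~\ref{prop:Hilb} is phrased as a monoidal equivalence for $\otimes$, so preservation of $\oplus$ must be extracted from the universal property of biproducts rather than read off directly from the statement.
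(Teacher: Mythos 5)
Your proposal is correct and takes essentially the same route as the paper, whose proof is literally ``combine Proposition~\ref{prop:Hilb} and Lemma~\ref{lem:contractions}''; you have merely made the bookkeeping explicit. The one point deserving a word of care is essential surjectivity: the isomorphisms $F(H)\cong\H$ witnessing it must be isomorphisms in $\cat{Con}_{\mathcal C}$ and not just in $\cat{Hilb}_{\mathcal C}$, which holds because $F$ is a \emph{dagger} equivalence and so the witnessing isomorphisms may be chosen to be unitaries.
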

\begin{proof}
  Combine Proposition~\ref{prop:Hilb} and Lemma~\ref{lem:contractions}.
\end{proof}

All of the isomorphisms of axiom~\eqref{axiom:rig} are preserved, making any dagger rig category satisfying axioms~\eqref{axiom:dagger}--\eqref{axiom:colimits} equivalent to $\cat{Con}_\RR$ or $\cat{Con}_\CC$ as a dagger rig category. The equivalence of Theorem~\ref{thm:main} is a weak equivalence, i.e., a full, faithful, and essentially surjective functor $\cat{D} \to \cat{Con}$. In fact, it is essentially surjective in the stronger sense that every object of $\cat{Con}$ is dagger isomorphic to one in its image by Lemma~\ref{lem:daggeriso}.

Let $\cat{D}$ be a category. If there exist a contravariant endofunctor $\dagger$ and symmetric monoidal structures $(\otimes, I)$ and $(\oplus, 0)$ that satisfy axioms~\eqref{axiom:dagger}--\eqref{axiom:colimits}, then there exists an equivalence $\cat{D} \to \cat{Con}$, as an immediate consequence of Theorem~\ref{thm:main}. The converse also holds.
For any equivalence $\cat{D} \to \cat{Con}$, there exist a dagger and dagger symmetric monoidal structures $(\otimes, I)$ and $(\oplus, 0)$ on $\cat{D}$ such that the equivalence preserves all of these structures; we combine~\cite[Lemma~V.1]{vicary:complex}, \cite[Corollary~3.1.6]{karvonen:dagger}, and~\cite[Theorem~5]{jacobsmandemaker:coreflections} and use the axiom of global choice.

Thus, a category $\cat{D}$ is equivalent to $\cat{Con}_\RR$ or $\cat{Con}_\CC$ if and only if there exist a dagger and symmetric monoidal structures $(\otimes, I)$ and $(\oplus, 0)$ on $\cat{D}$ satisfying axioms~\eqref{axiom:dagger}--\eqref{axiom:colimits}. In this way, these axioms serve to characterize $\cat{Con}_\RR$ and $\cat{Con}_\CC$ as categories with no additional structure.

\section*{Acknowledgments}

We thank Matthew Di Meglio and Martti Karvonen for careful reading and suggestions.

\bibliographystyle{plain}
\bibliography{bibliography}

\end{document}